\definecolor{phil}{rgb}{0.43, 0.52, 0.28}
\DeclareMathOperator*{\argmin}{argmin}
\DeclareMathOperator*{\sgn}{sgn}
\newcommand{\ds}{\displaystyle}
\newcommand{\nexto}{\kern -0.54em}
\newcommand{\dR}{{\rm {I\ \nexto R}}}
\newcommand{\dZ}{{\cal Z \kern -0.7em Z}}
\newcommand{\dC}{{\rm\hbox{C \kern-0.8em\raise0.2ex\hbox{\vrule
height5.4pt width0.7pt}}}}
\newcommand{\dQ}{{\rm\hbox{Q \kern-0.85em\raise0.25ex\hbox{\vrule
height5.4pt width0.7pt}}}}
\newcommand{\proofbox}{\hspace{\fill}{$\Box$}}
\newtheorem{lemma}{Lemma}
\newtheorem{theorem}{Theorem}
\newtheorem{remark}{Remark}
\newtheorem{assumption}{Assumption}
\newenvironment{proof}{Proof.}{\proofbox}
\begin{document}

\author{
C. Yal{\c c}{\i}n Kaya\footnote{Mathematics, UniSA STEM, University of South Australia, Mawson Lakes, S.A. 5095, Australia. E-mail: yalcin.kaya@unisa.edu.au\,.}
\and
Lyle Noakes\footnote{Department of Mathematics and Statistics, The University of Western Australia, Nedlands WA 6009, Australia. Email: Lyle.Noakes@uwa.edu.au \,.}
\and
Philip Schrader\footnote{School of Mathematics, Statistics, Chemistry and Physics, Murdoch University, Murdoch WA 6150, Australia. Email: Phil.Schrader@murdoch.edu.au}
}

\title{\vspace{0mm}\bf Curves of Minimax Spirality}

\maketitle

\begin{abstract} 
We study the problem of finding curves of minimum pointwise-maximum arc-length derivative of curvature, here simply called curves of minimax spirality, among planar curves of fixed length with prescribed endpoints and tangents at the endpoints.  We consider the case when simple bounds (constraints) are also imposed on the curvature along the curve.  The curvature at the endpoints may or may not be specified.  We prove via optimal control theory that the optimal curve is some concatenation of Euler spiral arcs, circular arcs, and straight line segments.  When the curvature is not constrained (or when the curvature constraint does not become active), an optimal curve is only made up of a concatenation of Euler spiral arcs, unless the oriented endpoints lie in a line segment or a circular arc of the prescribed length, in which case the whole curve is either a straight line segment or a circular arc segment, respectively.  We propose numerical methods and illustrate these methods and the results by means of three example problems of finding such curves.
\end{abstract}

\begin{verse} {\bf Key words.} {\sf Minimax spirality, Minimax curvature, Optimal control, Bang--bang control, Singular control, Euler spirals.}
\end{verse}

\begin{verse} {\bf AMS subject classifications.} {\sf Primary 49J15, 49K15\ \ Secondary 65K10, 90C30}
\end{verse}

\pagestyle{myheadings}
\thispagestyle{plain}
\markboth{}{\sf\scriptsize Curves of Minimax Spirality\ \ by\ C.~Y.~Kaya, L.~Noakes, and P.~Schrader}

\section{Introduction}

We are interested in finding a $\cal{C}^2$-curve $z:[0,t_f]\longrightarrow\dR^2$ which minimizes the $L^\infty$-norm of the arc-length derivative $\dot{\kappa}(t)/\Vert \dot z(t)\Vert$ of the (signed) curvature $\kappa(t)$, among all planar curves having fixed length; prescribed endpoints $z_0$ and $z_f$ at $0$ and $t_f$ respectively; prescribed tangents $v_0$ and $v_f$ and (possibly) prescribed curvatures $\kappa_0$ and $\kappa_f$, at the endpoints.  We also consider the case when the curvature is constrained by 
$|\kappa(t)| \le a$, where $a$ is a prescribed positive number.
We call this the problem of finding a {\em curve of minimax spirality}. The derivative of acceleration is widely known as jerk but there does not seem to be a name for the corresponding parametrization independent property of curves. The arc-length derivative of curvature measures the degree to which a curve is spiralling (or unspiralling) in on itself, and so \emph{spirality} seems appropriate.

Since the curvature and spirality are independent of parametrization we will consider only curves which are parametrized with respect to arc length so that $|\kappa(t)| = \Vert \ddot z(t) \Vert$, and the parameter $t_f$ is the length of the whole curve (and therefore $t_f$ is fixed). The problem can then be posed as
\[
\mbox{(P)}\left\{\begin{array}{rl}
\ds\min_{z(\cdot)} &\ \ds\max_{t\in[0,t_f]} |\dot{\kappa}(t)| \\[4mm]
\mbox{s.t.} &\ z(0) = z_0\,,\ z(t_f) = z_f\,,\\[2mm]
  &\ \dot{z}(0) = v_0\,,\ \dot{z}(t_f) = v_f\,,\\[2mm]
  &\ \kappa(0) = \kappa_0\,,\ \kappa(t_f) = \kappa_f\,,\ \ |\kappa(t)| \le a\,, \\[2mm]
   &\ \|\dot{z}(t)\| = 1\,,\mbox{ for a.e. }  t\in[0,t_f]\,,
\end{array}\right.
\]
where $\dot{z} = dz/dt$, $\ddot{z} = d^2z/dt^2$, $\|\cdot\|$ is the Euclidean norm, and $v_0$ and $v_f$ are given such that $\|v_0\| = \|v_f\| = 1$.

\subsection{Related problems and relevance}

The authors of this paper have previously studied {\em curves of minimax curvature}~\cite{KayNoaSch2024}, in which the $L^\infty$-norm of $\kappa$ is minimized instead of the $L^\infty$-norm of $\dot{\kappa}$ in~(P),  with no bounds or end conditions imposed on $\kappa(t)$. Curves of minimax curvature are related by interchange of cost and constraint to the {\em Markov--Dubins problem}~\cite{Dubins1957, Markov1889},  where $t_f$ is minimized under the condition that\linebreak $|\kappa(t)| \le a$. Although initially posed, and in some instances studied, by Andrey Markov in 1889~\cite{KreNud1977, Markov1889}, the Markov--Dubins problem was fully solved by Lester Dubins only in 1957~\cite{Dubins1957}.  Dubins' result characterizes solutions in terms of concatenations of  straight lines and circular arcs: let a straight line segment be denoted by an $S$ and a circular arc of the maximum allowed curvature $a$ (or, turning radius $1/a$) by a $C$.  Then the shortest curve is a concatenation of type $CCC$, or of type $CSC$, or a subset thereof.  The curves of minimax curvature, on the other hand, have been shown in~\cite{KayNoaSch2024} to be of type $CCC$, or $COC$, or $CSC$, or $SOS$, or a subset thereof, where $O$ stands for a full circle, i.e., a loop, and the radius $1/a$ of the circular arcs (including that of the loop) is optimally found.  Although a subarc of type $O$ can be regarded as a special case of a subarc of type $C$, it never appears as part of a solution of the Markov--Dubins problem.

In practical applications, while length minimizing curves usually address economical concerns (such as minimizing the length of railway tracks as originally studied by Markov~\cite{KreNud1977, Markov1889} in the late 19th century, or minimizing the length of tunnels in underground mines~\cite{ChaBraRubTho2012} in the 20th and 21st centuries), the curves minimizing curvature or spirality address the comfort of passengers in railway carriages or in vehicles on roads.  The latter curves have been an active area of research; see, for example, \cite{SadRabFelKha2023, ZboWozBol2021}, where a single Euler spiral is employed/fitted as a {\em track transition curve} (or {\em easement spiral}) joining tracks or roads with different profiles (for example, of type $C$ or $S$), for comfort.  We recall that an {\em Euler spiral}, also known as a {\em clothoid} or a {\em Cornu spiral}, is a curve whose curvature changes linearly with the length of the curve.  We will prove here that, in general, with unconstrained curvature, an $L^\infty$ minimizer of spirality is necessarily a $\cal{C}^2$ concatenation of Euler spirals, rather than just a single Euler spiral fitted to the given end conditions.  We stress that the $\cal{C}^2$-curves found as a solution of Problem~(P) are particularly suitable (i.e., implementable) for the kinds of practical applications we have just mentioned, as these curves would provide a (dynamically) smooth transition from one spiral path to another.

Euler spirals are a special case of the log-aesthetic curves  which have been extensively studied for their desirability as splines (see \cite{Inogu2023} and references therein). The properties which make them desirable are discussed in detail by Levien in \cite{Lev2009} with reference to applications in typography. 
The problem of minimizing the $L^2$-norm of spirality also appears in the context of computer aided geometric design - it seems to have been introduced a little earlier by Moreton \cite{Mor1993}, where the minimizers are called minimum curvature variation curves. A discrete Sobolev gradient approach to computing minimum curvature variation curves is described in \cite{Ren2004}, and existence and convergence results for a continuous gradient flow are proved in \cite{And2020}.

An extension of the Markov--Dubins problem which is somewhat more closely related to Problem~(P) was studied by Sussmann~\cite{Sussmann1997}: the problem of finding the shortest curves with simple (specified) bounds on the derivative of their curvature.  The problem studied by Sussmann differs from Problem~(P) in the following ways: (i)~the $L^\infty$-norm of $\dot{\kappa}$ in~(P) is replaced by the curve length $t_f$, (ii)~the constraint $|\dot{\kappa}(t)| \le 1$ is imposed instead of the minimization of the $L^\infty$-norm of $\dot{\kappa}$ and (iii)~the (state) constraint $|\kappa(t)| \le a$ in~(P) is not imposed.  Sussmann establishes, by using the maximum principle and differential geometric arguments, that the optimal control for the problem in~\cite{Sussmann1997} is of bang--bang type (ruling out bang--singular type of control), i.e., that $\dot{\kappa}$ is a piecewise-constant function switching from $-1$ to $1$ or from $1$ to $-1$.  He also asserts that the number of switchings can be infinite, that is, that the optimal control might exhibit chatter (as in Fuller's phenomenon \cite{Borisov2000}).

Reformulating the problem of finding variational curves as an optimal control problem and then employing a maximum principle so as to obtain a classification of the solution curves, as well as to devise a numerical solution procedure, have long been a powerful approach, going back to the 1960s~\cite{ManSch1969}.  Curves between two oriented points, including splines, have since been extensively studied in this way; see, for example, the earlier works in~\cite{AgwMar1998, BoiCerLeb1991, FreObeOpf1999, McClure1975, OpfObe1988, Sussmann1995, Sussmann1997, SusTan1991} as well as the more recent ones in~\cite{Kaya2017, Kaya2019, KayNoa2013, KayNoaSch2024}.

\subsection{Contribution}

In the current paper, we reformulate Problem~(P) as an optimal control problem as in~\cite{Kaya2017, KayNoa2013}  and use a maximum principle involving not only control but also state constraints.   In the absence of the state constraint $|\kappa(t)| \le a$, we  show that the optimal control $u = \dot{\kappa}$, is either of bang--bang type or is totally singular; in other words, $u(t)$ cannot be partially singular, and so bang--singular type of optimal control is ruled out (Lemma~\ref{lem:tot_singular}).  The bang--bang control function switches between $-b$ and $b$, where the value $b>0$ is also optimally found. In the presence of an active state constraint $|\kappa(t)| \le a$, on the other hand, we  show that the boundary control corresponds to a (partially) singular type of control; in other words, the optimal control $u = \dot{\kappa}$ is in general a concatenation of {\em bang} and {\em boundary} arcs, which may well include straight line segments.

Speaking in geometric terms, we show, in the case when Problem~(P) does not have the curvature constraint $\kappa(t) \le a$ imposed (or that $a$ is large enough so that $\kappa(t) \le a$ never becomes active), that a curve of minimax spirality, i.e., a solution curve for Problem~(P), can be of type $S$, or of type $C$, or a concatenation of (a number of) Euler spirals $\mathcal{S}\cdots\mathcal{S}$, an Euler spiral being denoted $\mathcal{S}$ here.  In the case when the constraint $\kappa(t) \le a$ becomes active, i.e., $\kappa(t) = a$ over some nontrivial subinterval of $[0,t_f]$, we show that a curve of minimax spirality is of type $S$, or of type $C$, or a concatenation of (a number of) $\mathcal{S}$, $C$ and $S$.  A classification statement of the solution curves of Problem~(P) is made in Theorem~\ref{theo:classification}.

To get a numerical solution of an optimal control reformulation of Problem~(P), we propose and formulate two well established approaches.  The first approach directly discretizes the optimal control problem, in this case using Euler's scheme, and employs standard optimization software to solve the resulting large-scale finite-dimensional optimization problem.  The second approach parametrizes the optimal control problem with respect to the lengths of the arcs comprising the solution curve and makes use of the structure of the solution found by the first approach, so as to compute the switching or junction times with a high precision.

We illustrate the classification results by means of numerical (critical) solutions of three example problems.  The first example does not involve a curvature constraint along the curve but it has specified curvatures at the end points.  In the second example, we impose a constraint on the curvature, but leave the end-point curvatures free.  The third example involves a curve joining two circular arcs and considers two separate instances.  The first instance results in five arcs, as opposed to four in the first two examples.  The numerical solution for the second instance exhibits what appears to be chattering (as in Fuller's phenomenon).  We propose a work-around for the chatter resulting in a solution curve of seven arcs.  We graphically verify the maximum principle whenever we can, and provide a detailed discussion of the solutions in each example problem.

The paper is organized as follows.  In Section~\ref{sec:reform_max_princ}, we reformulate Problem~(P) as two equivalent optimal control problems and state a maximum principle involving both control and state constraints.  In Section~\ref{sec:classification}, we provide some preliminary results about the optimal control problem we have and state the main result for a classification of the critical curves.  We present the numerical methods in Section~\ref{sec:num_meth}, as well as the numerical experiments involving the three example problems and their numerical solutions.  Finally, concluding remarks are provided in Section~\ref{sec:conclusion}.

\section{Reformulations and Maximum Principle}
\label{sec:reform_max_princ}

If the distance separating $z_0$ and $z_f$ is more than the fixed length $t_f$ then Problem~(P) has no solution.  In fact, with  $t_f = \|z_0-z_f\|$, a feasible solution may still not exist.  Therefore we make the following assumption.

\begin{assumption}
$t_f > \|z_0-z_f\|$.
\end{assumption}

\subsection{Reformulations}
\label{reformulations}

Problem~(P) is nonsmooth because of the $\max$ operator appearing in its objective functional.  On the other hand, Problem~(P) can be transformed into a smooth variational problem by using a standard technique from nonlinear programming, in the same way it was also done in \cite{KayNoa2013, KayNoaSch2024}:
\[
\mbox{(P1)}\left\{\begin{array}{rl}
\ds\min_{z(\cdot)} &\ b \\[2mm]
\mbox{s.t.} &\ z(0) = z_0\,,\ z(t_f) = z_f\,,\\[2mm]
    &\ \dot{z}(0) = v_0\,,\ \dot{z}(t_f) = v_f\,,\\[2mm]
    &\ \kappa(0) = \kappa_0\,,\ \kappa(t_f) = \kappa_f\,,\ \ |\kappa(t)| \le a\,, \\[2mm]
    &\ |\dot{\kappa}(t)| \le b\,,\ \ \ \|\dot{z}(t)\| = 1\,,\mbox{ for a.e. }  t\in[0,t_f]\,,
\end{array}\right.
\]
where the bound $b \ge 0$ is a new optimization variable of the problem.  We re-iterate that our aim is to find a $\cal{C}^2$-curve $z$ solving Problem~(P1).

\begin{remark}  \label{rem:observation} \rm
One has the solution $b = 0$ if and only if the curve joining $z_0$ and $z_f$ is either (i)~a straight line segment or (ii)~a circular arc (we are considering only $\cal{C}^2$-curves, so a concatenation of these is not admissible).  This is possible only in the respective case when (i)~$v_0 = v_f$ and $v_0$ and $v_f$ are colinear with $z_f - z_0$ and $\|z_f - z_0\| = t_f$ or (ii)~$v_0$ and $v_f$ are the tangents at the initial and terminal points of a circular arc of radius greater than or equal to $1/a$ and of length $t_f$.
\proofbox
\end{remark}

In the rest of this paper we will assume that $b > 0$, as there is nothing more to say for the simpler case of $b = 0$.

\begin{assumption} \label{assump:b_positive}
$b > 0$.
\end{assumption}

Problem~(P) can equivalently be cast as an optimal control problem as in \cite{Kaya2017, KayNoaSch2024} as follows. Let\linebreak $z(t) := (x(t), y(t))\in\dR^2$, with $\dot{x}(t) := \cos\theta(t)$ and $\dot{y}(t) := \sin\theta(t)$, where $\theta(t)$ is the angle the velocity vector $\dot{z}(t)$ of the curve $z(t)$ makes with the $x$-axis.  These definitions verify that $\|\dot{z}(t)\| = 1$.  Moreover,
\[
\|\ddot{z}\|^2 = \ddot{x}^2+\ddot{y}^2 = \dot{\theta}^2\,.
\]
Therefore, $|\dot{\theta}(t)|$ is nothing but the curvature.  In fact, $\dot{\theta}(t)$ itself, which can be positive or negative, is referred to as the {\em signed curvature}.  For example, consider a vehicle travelling along a circular path.  If $\dot{\theta}(t) > 0$ then the vehicle travels in the counter-clockwise direction, i.e., it {\em turns left}, and if $\dot{\theta}(t) < 0$ then the vehicle travels in the clockwise direction, i.e., it {\em turns right}.  If $\dot{\theta}(t) = 0$ then the vehicle travels along a straight line.

We note that $\kappa(t) = \dot{\theta}(t)$.  Suppose that the directions at the points $z_0$ and $z_f$ are denoted by the angles $\theta_0$ and $\theta_f$, respectively.  The curvature minimizing problem~(P1), or equivalently Problem~(P), can then be re-written as a ({\em parametric}) {\em optimal control problem}, where the objective functional is the maximum spirality, namely $b$ is the {\em parameter}, $x$, $y$, $\theta$ and $\kappa$ are the {\em state variables}, and $u$ is the {\em control variable}\,:
\[
\mbox{(Pc)}\left\{\begin{array}{rll}
\ds\min_{u(\cdot)} &\ \ds b & \\[2mm]
\mbox{s.t.} &\ \dot{x}(t) = \cos\theta(t)\,, & x(0) = x_0\,,\ 
              x(t_f) = x_f\,, \\[2mm] 
  &\ \dot{y}(t) = \sin\theta(t)\,, & y(0) = y_0\,,\ 
              y(t_f) = y_f\,,\\[2mm] 
  &\ \dot{\theta}(t) = \kappa(t)\,, & \theta(0) = \theta_0\,,\ \theta(t_f) = \theta_f\,,\\[2mm]
  &\ \dot{\kappa}(t) = u(t)\,, & \kappa(0) = \kappa_0\,,\ \kappa(t_f) = \kappa_f\,,\ \ |u(t)|\le b\,,  \\[2mm]
  &\ |\kappa(t)| \le a\,, &\mbox{for a.e. }  t\in[0,t_f]\,.
\end{array}\right.
\] 
Here, $|\kappa(t)| \le a$ can equivalently be written as $\kappa(t) - a \le 0$ and $-\kappa(t) - a \le 0$ which constitute the (smooth) {\em state constraints} of the problem.

In what follows, we study the optimality conditions for a re-formulation of Problem~(Pc) in a classical form. Let us redefine the control variable such that $u(t) := \beta(t)\,v(t)$, where $\beta(t) := b$ and so $\dot{\beta}(t) := 0$, for a.e. $t\in[0,t_f]$.  Here, the constant function $\beta$ is introduced in order to make the optimal control problem a rather standard one, without a parameter constraining the control, to which standard (non-parametric) formulations of the maximum principle are applicable.  Problem~(Pc), or equivalently Problem~(P), can then be re-written as the following {\em state- and control-constrained optimal control problem}\,:
\[
\mbox{(OC)}\left\{\begin{array}{rll}
\ds\min_{v(\cdot)} &\ \ds \int_0^{t_f} \beta(t)\,dt  & \\[4mm]
\mbox{s.t.} &\ \dot{x}(t) = \cos\theta(t)\,, & x(0) = x_0\,,\ 
              x(t_f) = x_f\,, \\[2mm] 
  &\ \dot{y}(t) = \sin\theta(t)\,, & y(0) = y_0\,,\ 
              y(t_f) = y_f\,,\\[2mm] 
  &\ \dot{\theta}(t) = \kappa(t)\,, & \theta(0) = \theta_0\,,\ 
              \theta(t_f) = \theta_f\,,\\[2mm]
  &\ \dot{\kappa}(t) = \beta(t)\,v(t)\,, & \kappa(0) = \kappa_0\,,\ \kappa(t_f) = \kappa_f\,, \\[2mm]
  &\ \dot{\beta}(t) = 0\,, & |v(t)|\le 1\,, \\[2mm]
  &\ \kappa(t) - a \le 0\,, & \\[2mm]
  &\ -\kappa(t) - a \le 0\,, &\mbox{for a.e. }  t\in[0,t_f]\,.
\end{array}\right.
\]

\subsection{Maximum principle for Problem~(OC)}
\label{sec:max_principle}

In this section, we state the maximum principle by using the {\em direct adjoining approach} from \cite[Theorem~4.1]{HarSetVic1995}, where the state constraints are adjoined to the Hamiltonian function ``directly''.  We note that the authors of \cite{HarSetVic1995} designate Theorem~4.1 in their paper as an ``Informal Theorem'' since they say that it has not been proved fully (for the general case of pure state, and mixed control and state, constraints).  However, they also point to Maurer's paper \cite{Maurer1979} as an exception where Theorem~4.1 has been proved for the case of pure state and pure control constraints, which is precisely our setting.  

We start by defining the extended/augmented {\em Hamiltonian function} for Problem~(OC) as
\begin{eqnarray}  \label{aug_Hamiltonian}
H(x,y,\theta,\kappa,\beta,v,\lambda,\mu_1,\mu_2) &:=& \lambda_0\,\beta
+ \lambda_1\,\cos\theta + \lambda_2\,\sin\theta + \lambda_3\,\kappa + \lambda_4\,\beta\,v + \lambda_5 \cdot 0 \nonumber \\
&& +\ \mu_1\,(\kappa - a) + \mu_2\,(-\kappa - a)\,,
\end{eqnarray}
where $\lambda_0\geq0$ is a scalar (multiplier) parameter, $\lambda:[0,t_f]\rightarrow\mathbb{R}^5$ is the {\em adjoint variable} vector with $\lambda(t):=(\lambda_1(t),\dots,\lambda_5(t))$, and $\mu_1,\mu_2:[0,t_f]\rightarrow\mathbb{R}$ are the {\em state constraint multipliers}. For brevity, we use the following notation,
\[
H[t] := H(x(t),y(t),\theta(t),\kappa(t),\beta(t),v(t),\lambda(t),\mu_1(t),\mu_2(t))\,.
\]
The adjoint variables are required to satisfy
\begin{subequations}
\begin{eqnarray}
&& \dot{\lambda}_1(t) := -H_x[t] = 0\,, \label{adjoint1} \\[1mm]
&& \dot{\lambda}_2(t) := -H_y[t] = 0\,, \label{adjoint2} \\[1mm]
&& \dot{\lambda}_3(t) := -H_\theta[t] = \lambda_1(t)\,\sin\theta(t) - \lambda_2(t)\,\cos\theta(t)\,, \label{adjoint3} \\[1mm]
&& \dot{\lambda}_4(t) := -H_\kappa[t] = -\lambda_3(t) - \mu_1(t) + \mu_2(t)\,,\ \ 
\label{adjoint4} \\[1mm]
&& \dot{\lambda}_5(t) := -H_\beta[t] = -\lambda_0 - \lambda_4(t)\,v(t)\,,\ \ \lambda_5(0) = 0\,,\ \lambda_5(t_f) = 0\,, \label{adjoint5} 
\end{eqnarray}
\end{subequations}
where $H_x = \partial H / \partial x$, etc.  

The maximum principle in~\cite[Theorem~4.1]{HarSetVic1995} for Problem~(OC) can be stated as follows.  Suppose that $x,y,\theta,\kappa,\beta\in W^{1,\infty}(0,t_f;\dR)$
and $v\in L^\infty(0,t_f;\dR)$ solve Problem~(OC).  Then
there exist a number $\lambda_0\ge0$, piecewise absolutely continuous adjoint variable vector $\lambda$, $i=1,2,3,4,5$, and piecewise continuous state constraint multipliers $\mu_i$, $i=1,2$, such that $(\lambda_0,\lambda(t), \mu_1(t), \mu_2(t)) \neq \bf0$, for every $t\in[0,t_f]$, and, in addition to the state differential equations and other constraints given in Problem~(OC) and the adjoint
differential equations \eqref{adjoint1}--\eqref{adjoint5}, the following hold for a.e. $t\in[0,t_f]$:  The optimal control is a minimizer of the extended Hamiltonian; namely,
\begin{equation}  \label{control} 
v(t)\in\argmin_{|w|\le 1} H(x(t),y(t),\theta(t), \kappa(t),\beta(t),w,\lambda_0,\lambda(t),\mu_1(t),\mu_2(t))\,,
\end{equation}
the state constraint multipliers satisfy the {\em complementarity conditions},
\begin{subequations}
\begin{align}
 \mu_1(t)\geq 0, \qquad \mu_1(t)\,(\kappa(t)-a)=0\,,  \label{eq:mu1} \\[1mm]
 \mu_2(t)\geq 0, \qquad \mu_2(t)\,(\kappa(t)+a)=0\,,  \label{eq:mu2}
\end{align}
\end{subequations}
and, since $H$ does not depend on $t$ explicitly,
\begin{equation}  \label{H_const}
H[t] = h\,,  
\end{equation}
where $h$ is some real constant.

\begin{remark} \rm
Define the state vector as $\xi(t) := (x(t), y(t), \theta(t), \kappa(t), \beta(t))$.  Suppose that the functional in Problem~(OC) contains a term involving the initial and terminal states, say $\varphi(\xi(0),\xi(t_f))$.  The maximum principle requires that the differential equation $\dot{\lambda} = -\partial H / \partial \xi$ holds with boundary conditions determined as follows. If $\xi_i(0)$ is specified (i.e., fixed), then $\lambda_i(0)$ is free; otherwise, one must have $\lambda_i(0) = -\partial\varphi(\xi(0),\xi(t_f))/\partial \xi_i(0)$.  Similarly, if $\xi_i(t_f)$ is specified, then $\lambda_i(t_f)$ is free; if $\xi_i(t_f)$ is not specified, one has $\lambda_i(t_f) = \partial\varphi(\xi(0),\xi(t_f))/\partial \xi_i(t_f)$.  For these reasons, since $\xi_i(0)$ and $\xi_i(t_f)$ are specified (numbers), for $i = 1,\ldots 4$, $\lambda_i(0)$ and $\lambda_i(t_f)$ are free in \eqref{adjoint1}--\eqref{adjoint4}; and, since $\xi_5(0)$ and $\xi_5(t_f)$, that is, $\beta(0)$ and $\beta(t_f)$, are not specified, given that
$\varphi(\xi(0),\xi(t_f)) = 0$ in Problem~(OC),
one has $\lambda_5(0) = -\partial\varphi(\xi(0),\xi(t_f))/\partial \xi_5(0) = 0$ and $\lambda_5(t_f) = \partial\varphi(\xi(0),\xi(t_f))/\partial \xi_5(t_f)= 0$ in~\eqref{adjoint5}.
\proofbox
\end{remark}

\begin{remark} \rm
For brevity we imposed $-a \le \kappa(t) \le a$, with $a$ constant.  This can be generalized to (the time-varying) $\underline{a}(t) \le \kappa(t) \le \overline{a}(t)$, where $\underline{a} : [0,t_f] \to \dR$ and $\overline{a} : [0,t_f] \to \dR$, $\underline{a}(t) \le \overline{a}(t)$ for all $t\in[0,t_f]$, are continuously differentiable~\cite{HarSetVic1995}.  With the more general constraint, the transversality conditions \eqref{eq:mu1}--\eqref{eq:mu2} would be written down in exactly the same way, with the equalities simply replaced by\ \ $\mu_1(t)\,(\kappa(t)-\overline{a}(t))=0$\ \ and\ \ $\mu_2(t)\,(\kappa(t)+\underline{a}(t))=0$, respectively.
\proofbox
\end{remark}

Using the definition in \eqref{aug_Hamiltonian} and the assumption that $b>0$, \eqref{control} can be concisely written as
\begin{equation}  \label{control2}
v(t)\in\argmin_{|w|\le 1}\ \lambda_4(t)\,w\,,
\end{equation}
which yields the optimal control as
\begin{equation}  \label{control3}
v(t) = \left\{\begin{array}{ll}
\ \ 1\,, & \mbox{if}\  \lambda_4(t) < 0\,, \\[3mm]
-1\,, & \mbox{if}\ \lambda_4(t) > 0\,, \\[3mm]
\mbox{undetermined}\,, & \mbox{if}\ \lambda_4(t) = 0\,.
\end{array}\right.
\end{equation}

Note that since both state constraints cannot be satisfied at the same time, it follows from \eqref{eq:mu1}--\eqref{eq:mu2} that if $\mu_1 > 0$, i.e., if the constraint $\kappa(t) \le a$ is active (which means that $\kappa(t) = a$), then $\mu_2 = 0$.  Likewise, if $\mu_2 > 0$ then $\mu_1 = 0$.

The ODEs in \eqref{adjoint1}--\eqref{adjoint2} imply that $\lambda_1(t) = \overline{\lambda}_1$ and $\lambda_2(t) = \overline{\lambda}_2$ for all $t\in[0,t_f]$, where $\overline{\lambda}_1$ and $\overline{\lambda}_2$ are constants.  Define new constants
\begin{equation}  \label{eq:rhophi}
\rho := \sqrt{\overline{\lambda}_1^2 +
  \overline{\lambda}_2^2}\,,\qquad 
\tan\phi := \frac{\overline{\lambda}_2}{\overline{\lambda}_1}\,.
\end{equation}
Then, using \eqref{eq:rhophi} and \eqref{eq:mu1}--\eqref{eq:mu2}, \eqref{aug_Hamiltonian} can be re-written as
\begin{equation}  \label{Hamiltonian2}
H[t] = b\,\lambda_0 + \rho\,\cos(\theta(t) - \phi) + \lambda_3(t)\,\kappa(t) + b\,\lambda_4(t)\,v(t)\,,
\end{equation}
where we have also used $\beta(t) = b$, for all $t\in[0,t_f]$, and \eqref{adjoint3} can be re-written as
\begin{equation}  \label{adjoint3a}
\dot{\lambda}_3(t) = \rho\,\sin(\theta(t) - \phi)\,.
\end{equation}
Furthermore, \eqref{Hamiltonian2} and \eqref{H_const} give
\begin{equation}  \label{H_const2}
\lambda_3(t)\,\kappa(t) + b\,\lambda_4(t)\,v(t) + \rho\,\cos(\theta(t) - \phi) + b\,\lambda_0 =: h\,.
\end{equation}

Next, we recall the terminology on the types of optimal control for Problem~(OC).  The control $v(t)$ to be chosen for the case when $\lambda_4(t)=0$ for a.e.\ $t\in[\zeta_1,\zeta_2]\subseteq[0,t_f]$ is referred to as {\em singular control} over $[\zeta_1,\zeta_2]$, because \eqref{control2} does not yield any further information.  On the other hand, when $\lambda_4(t)\neq 0$ for a.e.\ $t\in[\zeta_1,\zeta_2]\subseteq[0,t_f]$, i.e., it is possible to have $\lambda_4(t)=0$ only for isolated values of $t$, the control $v(t)$ is said to be {\em nonsingular} over $[\zeta_1,\zeta_2]$.  

It should be noted that, if $\lambda_4(\tau)=0$ only at an isolated point $\tau$, the optimal control at this isolated point can be chosen as $v(\tau) = -1$ or $v(\tau) = 1$, conveniently.  Therefore, if the control $v(t)$ is nonsingular, it will take on either the value $-1$ or $1$, the bounds on the control variable.  In this case, the control $v(t)$ is referred to as {\em bang--bang}.  The trajectory segment generated by $v(t) = 1$ or $v(t) = -1$ is referred to as a {\em bang arc}.  Since the sign of $\lambda_4(t)$ determines the value of the optimal control $v(t)$, $\lambda_4$ is referred to as the {\em switching function}.

If a state constraint is active for $t\in[\zeta_1,\zeta_2]\subseteq[0,t_f]$, then the control $v(t)$ is referred to as {\em boundary control} over $[\zeta_1,\zeta_2]$.  In the case for our particular problem, the boundary control $v(t) = 0$, since $\dot{\kappa}(t) = 0$, for $t\in[\zeta_1,\zeta_2]\subseteq[0,t_f]$.

\begin{remark} \label{rem:C2_curve} \rm
By the maximum principle elaborated above, one concludes that a solution curve for Problem~(OC) is in general a concatenation of bang--bang (or nonsingular), singular and boundary arcs.  By looking at the differential equations in Problem~(OC), we observe that, if $u(t)$ is a concatenation of (finitely many) bang--bang, singular and boundary control (in other words, $u$ is a piecewise-continuous function), then $\kappa$ will be of class $\cal{C}^0$ (to be more precise, $W^{1,\infty}$), $\theta$ class $\cal{C}^1$, and the curve $z$, such that $z(t) = (x(t), y(t))$, class $\cal{C}^2$ (to be more precise, $W^{3,\infty}$, which readily contains $\cal{C}^2$), as intended in Problems~(P), (P1) or (OC).
\proofbox
\end{remark}

\section{Classification of Critical Curves}
\label{sec:classification}

\begin{lemma}[Singularity With Inactive Curvature Constraints] \label{lem:singular_inactive}
Suppose that the curvature constraints do not become active and that the optimal control $v(t)$ for Problem~{\em (OC)} is singular, over an interval $[\zeta_1,\zeta_2]\subseteq[0,t_f]$.  Then $\kappa(t) = 0$, i.e., $\theta(t)$ is constant, and it follows that $v(t) = 0$, for a.e.\ $t\in[\zeta_1,\zeta_2]$.
\end{lemma}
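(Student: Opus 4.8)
The plan is to exploit the singularity condition $\lambda_4 \equiv 0$ on $[\zeta_1,\zeta_2]$ together with the adjoint equations to drive a chain of costates to zero, and then to read off $\kappa = 0$ directly from \eqref{adjoint3a}. First I would note that, since $v$ is singular, $\lambda_4(t) = 0$ for a.e.\ $t \in [\zeta_1,\zeta_2]$; as $\lambda_4$ is absolutely continuous on the arc this gives $\lambda_4 \equiv 0$, and hence $\dot\lambda_4 = 0$ a.e.\ there. Because the curvature constraints are inactive we have $\mu_1 = \mu_2 = 0$, so \eqref{adjoint4} reduces to $\dot\lambda_4 = -\lambda_3$, whence $\lambda_3 \equiv 0$ on the arc. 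Since $\lambda_3 \equiv 0$ implies $\dot\lambda_3 = 0$ there, \eqref{adjoint3a} then yields $\rho\,\sin(\theta(t)-\phi) = 0$ for a.e.\ $t \in [\zeta_1,\zeta_2]$, and by continuity of $\theta$ this holds for every such $t$.

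Next I would split on the value of $\rho$ defined in \eqref{eq:rhophi}. In the generic case $\rho \neq 0$ we get $\sin(\theta(t)-\phi) = 0$ throughout the arc, so $\theta(t)-\phi$ takes values in the discrete set $\{k\pi : k \in \dZ\}$; since $\theta$ is absolutely continuous it cannot jump between these isolated values, and is therefore constant on $[\zeta_1,\zeta_2]$, giving $\kappa = \dot\theta = 0$. Finally, $\kappa \equiv 0$ forces $\dot\kappa \equiv 0$, and since $\dot\kappa = \beta\,v = b\,v$ with $b > 0$, we conclude $v = 0$ a.e.\ on the arc, as claimed.

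The main obstacle is the degenerate alternative $\rho = 0$, in which the costate decouples from $\theta$ and the argument above gives no information about $\kappa$; I would rule it out using the nontriviality of the multipliers and the endpoint conditions on $\lambda_5$. If $\rho = 0$ then $\overline\lambda_1 = \overline\lambda_2 = 0$, so \eqref{adjoint3a} gives $\dot\lambda_3 \equiv 0$ on all of $[0,t_f]$; thus $\lambda_3$ is a global constant, equal to its value $0$ on the arc, so $\lambda_3 \equiv 0$ everywhere. With the constraints inactive, \eqref{adjoint4} gives $\dot\lambda_4 = -\lambda_3 = 0$ throughout, so $\lambda_4$ is also globally constant and equal to $0$. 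Then \eqref{adjoint5} reads $\dot\lambda_5 = -\lambda_0$, and integrating over $[0,t_f]$ with $\lambda_5(0)=\lambda_5(t_f)=0$ gives $\lambda_0\,t_f = 0$, hence $\lambda_0 = 0$; consequently $\lambda_5 \equiv 0$ as well. But then $(\lambda_0,\lambda(t),\mu_1(t),\mu_2(t)) = \mathbf 0$ on the arc, contradicting the nontriviality clause of the maximum principle, so $\rho \neq 0$ and the proof is complete. I expect the bookkeeping in this last step---establishing that $\lambda_3$ and $\lambda_4$ are \emph{globally} constant so that the $\lambda_5$ endpoint conditions can be applied---to be the delicate part, since it is exactly where the global (rather than merely local) inactivity of the curvature constraints is used.
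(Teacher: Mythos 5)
Your proof is correct and follows essentially the same route as the paper's: singularity gives $\lambda_4 = \dot\lambda_4 = 0$ on the arc, inactivity gives $\mu_1 = \mu_2 = 0$ and hence $\lambda_3 = \dot\lambda_3 = 0$ there, $\rho \neq 0$ is forced by nontriviality of the multipliers, and then \eqref{adjoint3a} yields $\sin(\theta(t)-\phi)=0$, so $\theta$ is constant, $\kappa = 0$, and $v = 0$ a.e. Your detailed handling of the degenerate case $\rho = 0$ (propagating $\lambda_3 \equiv 0$, $\lambda_4 \equiv 0$ globally and using the endpoint conditions on $\lambda_5$ to force $\lambda_0 = 0$) is simply a careful expansion of the one-sentence nontriviality argument the paper gives for the same step.
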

\begin{proof}
Suppose that the constraints on the curvature do not become active and that the optimal control $v(t)$ is singular, i.e., $\lambda_4(t)=0$, and so $\dot\lambda_4(t)=0$, for a.e.\ $t\in[\zeta_1,\zeta_2]\subseteq[0,t_f]$.  Since the constraints on the curvature do not become active, one has that $\mu_1(t) = \mu_2(t) = 0$, for a.e.\ $t\in[\zeta_1,\zeta_2]$.  Therefore, from \eqref{adjoint4}, $\lambda_3(t)=0$  and  $\dot\lambda_3(t)=0$ for a.e.\ $t\in[\zeta_1,\zeta_2]$.  The constant $\rho$ cannot be zero, as otherwise $\lambda_i(t) = 0$, for $i = 1,\ldots,5$, and $\lambda_0 = 0$, violating the maximum principle.  Therefore, from~\eqref{adjoint3a}, $\sin(\theta(t) - \phi) = 0$, which implies that $\theta(t)$ is constant, i.e., $\dot{\theta}(t) = \kappa(t) = 0$, and subsequently $v(t) = 0$, for a.e. $t\in[\zeta_1,\zeta_2]$.
\end{proof}

Recall from optimal control terminology that a trajectory segment where a state constraint is active is called a {\em boundary arc}.  The following lemma states that the optimal control along a boundary arc of Problem~(OC) is the same as that along a singular arc.

\begin{lemma}[Singularity With Active Curvature Constraints] \label{lem:singular_active}
Suppose that a curvature\linebreak constraint of Problem~{\em (OC)} is active over an interval $[\zeta_1,\zeta_2]\subseteq[0,t_f]$.  Then the resulting optimal control along the boundary arc is $v(t) = 0$, the same as that along a singular arc.
\end{lemma}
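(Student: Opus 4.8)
The plan is to handle the statement in two layers: first a purely kinematic computation that pins down $v$, and then an appeal to the minimum condition to certify that this $v$ is singular. Without loss of generality I would take the upper constraint to be the active one, so that $\kappa(t) = a$ for a.e.\ $t \in [\zeta_1,\zeta_2]$; the case $\kappa(t) = -a$ is identical after a sign change. Since $\kappa \in W^{1,\infty}(0,t_f;\dR)$ is absolutely continuous and constant on this subinterval, its derivative vanishes there, i.e.\ $\dot\kappa(t) = 0$ for a.e.\ $t \in [\zeta_1,\zeta_2]$.

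From the state equation $\dot\kappa(t) = \beta(t)\,v(t) = b\,v(t)$ and the conclusion $\dot\kappa(t) = 0$ just obtained, I get $b\,v(t) = 0$ for a.e.\ $t \in [\zeta_1,\zeta_2]$. Invoking the standing assumption $b > 0$ then forces $v(t) = 0$ a.e.\ on the interval, which is the first claim of the lemma.

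To see that this control is singular, I would read the minimum condition \eqref{control2}, which requires $v(t) \in \argmin_{|w|\le 1} \lambda_4(t)\,w$ for the optimal $v$. The value just found, $v(t) = 0$, sits in the interior of $[-1,1]$, whereas $w \mapsto \lambda_4(t)\,w$ is linear; hence $w = 0$ can be a minimizer only if $\lambda_4(t) = 0$, since by \eqref{control3} a nonzero $\lambda_4(t)$ would push the unique minimizer to $\pm 1$. Therefore $\lambda_4(t) = 0$ for a.e.\ $t \in [\zeta_1,\zeta_2]$, and because $\lambda_4$ is piecewise absolutely continuous this yields $\lambda_4 \equiv 0$ throughout the interval, which is precisely the definition of singular control there.

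The step I expect to carry the real content is the last one, where the kinematic fact $v = 0$ has to be translated into the adjoint statement $\lambda_4 = 0$. This is not so much a computational obstacle as a matter of correctly wielding the minimum principle in the reverse direction: it is exactly because the optimal control is already known to take the interior value $0$ that the switching function is compelled to vanish. This distinguishes the present boundary-arc singularity from the inactive-constraint singularity treated in Lemma~\ref{lem:singular_inactive}, where singularity instead forced $\kappa \equiv 0$.
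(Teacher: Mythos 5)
Your proof is correct and takes essentially the same route as the paper's: an active constraint forces $\dot\kappa(t)=0$ a.e., the state equation $\dot\kappa = b\,v$ with $b>0$ yields $v(t)=0$, and the minimum condition then identifies this as singular control. The only difference is cosmetic: you spell out explicitly why the interior value $v=0$ compels $\lambda_4\equiv 0$ on the interval, a step the paper leaves implicit by simply citing \eqref{control3}.
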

\begin{proof}
Since a curvature constraint is active, one has that $\kappa(t) = a$ or $\kappa(t) = -a$, and that $\dot{\kappa}(t) = 0$ for a.e.\ $t\in[\zeta_1,\zeta_2]$.  Then, from the fourth state differential equation in Problem~(OC) with $b>0$, one gets $v(t) = 0$, the same as that along a singular as can be deduced from~\eqref{control3}.
\end{proof}

\begin{remark}[Types of Solution Arcs] \label{rem:types} \rm
Lemma~\ref{lem:singular_inactive} asserts that if the curvature is not constrained, then the singular arc, i.e, the curve generated by singular control in some subinterval, can only be a straight line segment. Lemma~\ref{lem:singular_active} on the other hand states that, if the curvature is constrained (and a constraint is active), then the boundary arc is a circular arc.  We note that, although Lemma~\ref{lem:singular_active} asserts that the controls along the singular and boundary arcs are the same, the respective arcs, or the trajectories, themselves are in general different.

If the optimal control is nonsingular, i.e., one has a bang arc, over some nontrivial interval $[\eta_1,\eta_2]\subseteq[0,t_f]$, then, by \eqref{control3}, $\dot\kappa(t)$ is either $b$ or $-b$, i.e., $\dot\kappa(t) = \pm b$, resulting in $\kappa(t) = \pm b\,t + c$, with $c$ another constant, and $\theta(t) = \pm b\,t^2/2 + c\,t + d$, with $d$ yet another constant, for all $t\in[\eta_1,\eta_2]$.  The arc arising from solving the ODEs in Problem~(OC), given
 by
\begin{equation}  \label{spiral}
(x(t),y(t)) = (x(\eta_1),y(\eta_1)) + (\int_{\eta_1}^t \cos\theta(t)\,dt, \int_{\eta_1}^t\sin\theta(t)\,dt)\,,
\end{equation}
for all $t\in[\eta_1,\eta_2]$, is an {\em Euler spiral}, whose curvature $\kappa(t)$ changes linearly with $t$, the current length of the arc.  The Euler spiral is also referred to as a {\em clothoid} or a {\em Cornu spiral}.  We will refer to \eqref{spiral} simply as a {\em spiral}, for brevity, in the rest of the paper.  The integrals appearing in~\eqref{spiral} with $\theta(t)$ quadratic in $t$ are called the {\em Fresnel integrals}.
\proofbox
\end{remark}

Using Lemmas~\ref{lem:singular_inactive} and \ref{lem:singular_active}, we can rewrite the optimal control in \eqref{control3} as
\begin{equation} \label{v}
v(t) = -\sgn(\lambda_4(t))\,, \mbox{ a.e. } t\in[0,t_f]\,. 
\end{equation}

The problems that yield a solution with $\lambda_0 = 0$, and the solutions themselves, are referred to as {\em abnormal} in the optimal control theory literature. In the case of abnormal solutions the conditions obtained from the maximum principle are independent of the objective functional $\int_0^{t_f} \beta(t)\,dt$ in Problem~(OC) and therefore not sufficiently informative.  The problems that yield $\lambda_0 > 0$, and their solutions, are referred to as {\em normal}.  Lemma~\ref{lem:normality} below states that we can rule out abnormality for Problem~(OC).

\begin{lemma}[Normality of Solutions] \label{lem:normality}
Problem~{\em (OC)} is normal, i.e., one can set $\lambda_0 := 1$.
\end{lemma}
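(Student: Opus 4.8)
The plan is to argue by contradiction: assume $\lambda_0 = 0$ and show that the optimal curve is then forced to be a single straight line or a single circular arc, which is incompatible with the standing assumption $b>0$ via Remark~\ref{rem:observation}.

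First I would exploit the adjoint equation \eqref{adjoint5} for the parameter-conjugate variable $\lambda_5$, together with its natural boundary conditions $\lambda_5(0)=\lambda_5(t_f)=0$. Substituting the optimal control \eqref{v}, which gives $\lambda_4(t)\,v(t) = -|\lambda_4(t)|$ for a.e.\ $t$ (valid in the singular case too, where $\lambda_4=0$), equation \eqref{adjoint5} becomes $\dot\lambda_5(t) = -\lambda_0 + |\lambda_4(t)|$. Since the pure state constraints involve only $\kappa$, any jumps in the adjoint vector at junctions with the constraint boundary are confined to the $\kappa$-conjugate component $\lambda_4$, so $\lambda_5$ is globally absolutely continuous. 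Integrating over $[0,t_f]$ and using the boundary conditions then yields
\[
\lambda_0\,t_f = \int_0^{t_f} |\lambda_4(t)|\,dt\,.
\]
With $\lambda_0 = 0$ this forces $\lambda_4(t) = 0$ for a.e.\ $t\in[0,t_f]$.

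Next I would observe that, because $\lambda_4 \equiv 0$ on the whole interval, the control is totally singular. On any subinterval where the curvature constraint is inactive, Lemma~\ref{lem:singular_inactive} gives $\kappa = 0$ and $v = 0$; on any subinterval where it is active, Lemma~\ref{lem:singular_active} gives $v = 0$ (with $\kappa = \pm a$). Hence $v(t) = 0$ a.e., so $\dot\kappa = b\,v = 0$ a.e., and continuity of $\kappa$ forces it to be constant on all of $[0,t_f]$. The solution curve is therefore a single straight line segment (if $\kappa \equiv 0$) or a single circular arc (if $\kappa$ is a nonzero constant). By Remark~\ref{rem:observation} this means $b = 0$, contradicting the standing assumption $b>0$. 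I would conclude that $\lambda_0 > 0$, and after rescaling the multipliers set $\lambda_0 := 1$.

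I expect the main obstacle to be the first step, namely rigorously establishing $\lambda_4 \equiv 0$: one must verify the sign identity $\lambda_4 v = -|\lambda_4|$ coming from \eqref{control3}--\eqref{v}, and, more delicately, the absence of jumps in $\lambda_5$ so that the fundamental theorem of calculus applies across the state-constraint junctions. Once $\lambda_4\equiv0$ is in hand, the remaining steps follow immediately from Lemmas~\ref{lem:singular_inactive}--\ref{lem:singular_active} and Remark~\ref{rem:observation}.
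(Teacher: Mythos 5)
Your proposal is correct and takes essentially the same route as the paper's proof: both argue by contradiction from $\lambda_0=0$, use \eqref{adjoint5} together with \eqref{v} and the boundary conditions $\lambda_5(0)=\lambda_5(t_f)=0$ to force $\lambda_4\equiv 0$, then invoke Lemmas~\ref{lem:singular_inactive} and~\ref{lem:singular_active} to conclude that $\kappa$ is constant, hence $b=0$, contradicting the standing assumption $b>0$. Your integral identity $\lambda_0\,t_f=\int_0^{t_f}|\lambda_4(t)|\,dt$ and your observation that state-constraint jumps can affect only the $\kappa$-conjugate component $\lambda_4$ are merely slightly more explicit renderings of the paper's monotonicity argument ($\dot\lambda_5(t)=|\lambda_4(t)|\ge 0$ pinned to zero at both endpoints).
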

\begin{proof}
For contradiction purposes, suppose that $\lambda_0 = 0$ and recall Assumption~\ref{assump:b_positive}, which states that $b>0$. We exclude the case when $b=0$ as it has already been discussed in Remark~\ref{rem:observation}.  Then, from~\eqref{adjoint5} and \eqref{v}, $\dot{\lambda}_5(t) = |\lambda_4(t)| \ge 0$, for every $t\in[0,t_f]$, which, along with the boundary conditions $\lambda_5(0) = \lambda_5(t_f) = 0$, implies that $\lambda_5(t) = \lambda_4(t) = 0$ for every $t\in[0,t_f]$.  In other words, the optimal control is singular over $[0,t_f]$.  Then, by Lemma~\ref{lem:singular_inactive} or \ref{lem:singular_active}, for the respective cases of unconstrained and (active) constrained curvature, $\kappa(t)$ is constant for every $t\in[0,t_f]$, implying that $b = 0$, which is a contradiction.  Therefore $\lambda_0 > 0$.  Then, without loss of generality, one can set $\lambda_0 = 1$.
\end{proof}

Suppose that the curve $(x(t),y(t))$ is made up of $s$ arcs, with each arc defined as the curve segment $(x(t),y(t))$ for $t\in[t_{i-1},t_i]$, where $0 = t_0 < t_1 < \cdots <t_{s-1} < t_s = t_f$.

\begin{lemma}[Switching Function With Inactive Curvature Constraints]  \label{lem:lambda4_DE}
Suppose that the curvature constraints do not become active (or that the curvature in Problem~(OC) is unconstrained). Then the switching function $\lambda_4$ for Problem~{\em (OC)} solves the differential equation
\begin{equation} \label{eq:costate} 
\dddot{\lambda}_4(t) = -\kappa(t) \left(\kappa(t)\,\dot{\lambda}_4(t) + b\,|\lambda_4(t)| - b + h\right)\,,
\end{equation}
where $\kappa(t) = -b\,\sgn(\lambda_4(t))\,t + c_i$, with $c_i$ a constant along the $i$th arc, for all $t\in[0,t_f]$, with $c_i$ chosen in such a way that $\kappa$ is continuous.
\end{lemma}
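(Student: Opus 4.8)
The plan is to differentiate the switching function $\lambda_4$ three times and repeatedly feed in the adjoint equations, then eliminate the remaining geometric term using the constancy of the Hamiltonian. First, since the curvature constraints are assumed inactive, the complementarity conditions \eqref{eq:mu1}--\eqref{eq:mu2} force $\mu_1(t) = \mu_2(t) = 0$, so \eqref{adjoint4} collapses to $\dot\lambda_4 = -\lambda_3$. Differentiating once more and using the rewritten adjoint equation \eqref{adjoint3a} gives $\ddot\lambda_4 = -\dot\lambda_3 = -\rho\sin(\theta-\phi)$, and a third differentiation, together with $\dot\theta = \kappa$, gives $\dddot\lambda_4 = -\kappa\,\rho\cos(\theta-\phi)$.

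At this stage the right-hand side still contains the quantity $\rho\cos(\theta-\phi)$, which I would remove using the first integral \eqref{H_const2}. Invoking Lemma~\ref{lem:normality} to set $\lambda_0 = 1$, equation \eqref{H_const2} can be solved for $\rho\cos(\theta-\phi) = h - b - \lambda_3\,\kappa - b\,\lambda_4\,v$. Substituting $\lambda_3 = -\dot\lambda_4$ from the relation above, and using \eqref{v} in the form $\lambda_4\,v = -|\lambda_4|$, turns this into $\rho\cos(\theta-\phi) = \kappa\,\dot\lambda_4 + b\,|\lambda_4| - b + h$. Multiplying by $-\kappa$ and comparing with the expression for $\dddot\lambda_4$ then yields precisely \eqref{eq:costate}.

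For the accompanying formula for $\kappa$, I would combine the fourth state equation $\dot\kappa = \beta\,v = b\,v$ with \eqref{v}, giving $\dot\kappa = -b\,\sgn(\lambda_4)$. Along the $i$th arc the sign of $\lambda_4$ is constant, so integrating yields $\kappa(t) = -b\,\sgn(\lambda_4(t))\,t + c_i$ with $c_i$ constant on that arc.

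I do not anticipate a serious analytic obstacle, since the argument is essentially a guided computation; the only points requiring care are (a) confirming that the hypothesis of inactive constraints is exactly what is needed to annihilate the multiplier terms in \eqref{adjoint4}, and (b) the treatment of singular subarcs, where $\lambda_4 \equiv 0$ renders $\sgn(\lambda_4)$ ambiguous. On such subarcs Lemma~\ref{lem:singular_inactive} guarantees $\kappa \equiv 0$, so the $\kappa$-formula still holds (with the convention $\sgn(0)=0$ and $c_i = 0$) and \eqref{eq:costate} is satisfied trivially. The one substantive idea, such as it is, lies in recognizing that the constant Hamiltonian provides exactly the relation needed to replace $\rho\cos(\theta-\phi)$ by an expression in $\lambda_4$, $\dot\lambda_4$, and the data $b$, $h$, $\kappa$, thereby closing the third-order equation.
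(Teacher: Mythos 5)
Your proposal is correct and follows essentially the same route as the paper's own proof: differentiate \eqref{adjoint4} twice (with $\mu_1=\mu_2=0$) to get $\dddot{\lambda}_4 = -\rho\,\kappa\cos(\theta-\phi)$, then use \eqref{H_const2} with $\lambda_0=1$, $\lambda_3=-\dot{\lambda}_4$ and $\lambda_4 v = -|\lambda_4|$ to eliminate $\rho\cos(\theta-\phi)$, and integrate $\dot{\kappa}=-b\,\sgn(\lambda_4)$ arcwise for the curvature formula. Your added remark on singular subarcs is a harmless refinement the paper does not bother with.
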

\begin{proof}
Differentiating both sides of \eqref{adjoint4} twice (with $\mu_1(t) = \mu_2(t) \equiv 0$ since the curvature constraints remain inactive), using \eqref{adjoint3a} and the third ODE in Problem~(OC), one obtains
\begin{equation} \label{eqnA} 
\dddot{\lambda}_4(t) = -\ddot{\lambda}_3 = -\rho\,\dot{\theta}(t)\,\cos(\theta(t) - \phi) = -\rho\,\kappa(t)\,\cos(\theta(t) - \phi)\,.
\end{equation}
Using \eqref{adjoint4}, \eqref{v} and $\lambda_0 = 1$ in \eqref{H_const2}, one gets
\[
\rho\,\cos(\theta(t) - \phi) = \kappa(t)\,\dot{\lambda}_4(t) + b\,|\lambda_4(t)| - b + h\,.
\]
Substituting this into the right-hand side of \eqref{eqnA} and rearranging give \eqref{eq:costate}.  Finally, the solution of the fourth ODE in Problem~(OC), after substituting $\beta(t) = b$ and $v(t)$ in \eqref{v}, yields $\kappa(t) = -b\,\sgn(\lambda_4(t))\,t + c_i$, with $c_i$ a constant along the $i$th arc, for all $t\in[0,t_f]$, with $c_i$ chosen in such a way that $\kappa$ is continuous by Remark~\ref{rem:C2_curve}.
\end{proof}

The following lemma asserts that singularity cannot happen partially; in other words, the bang and singular arcs cannot exist simultaneously in a solution trajectory if neither of the curvature constraints becomes active and there is no chattering (as in Fuller's phenomenon).

\begin{lemma}[Total Singularity] \label{lem:tot_singular}
Suppose that the curvature constraints do not become active (or that the curvature in Problem~(OC) is unconstrained).  If the optimal control $v(t)$ for Problem~{\em (OC)} is singular over an interval $[\zeta_1,\zeta_2]\subseteq[0,t_f]$, and if there is no chattering, then it is singular over $[0,t_f]$.
\end{lemma}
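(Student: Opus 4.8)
The plan is to argue by contradiction at a single junction between the singular arc and an adjacent bang arc. Suppose the control is singular on $[\zeta_1,\zeta_2]$ but not on all of $[0,t_f]$, and enlarge $[\zeta_1,\zeta_2]$ to a maximal interval of singularity containing the given one; since there is no chattering such a maximal interval is well defined, and being proper it has an endpoint, say $\zeta_2$, in $(0,t_f)$ at which the control becomes genuinely bang. Thus there is a $\delta>0$ with $v(t)\equiv\epsilon\in\{-1,+1\}$ on $[\zeta_2,\zeta_2+\delta]$ (the left endpoint $\zeta_1$ is handled identically after reversing time). Because the curvature constraints are inactive we have $\mu_1=\mu_2\equiv0$, so by the cited maximum principle the adjoint variables carry no jumps at $\zeta_2$ and $\lambda_3,\lambda_4$ are continuous there.

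First I would read off the junction data from the singular side via Lemma~\ref{lem:singular_inactive}. On $[\zeta_1,\zeta_2]$ one has $\lambda_4\equiv0$, hence $\dot\lambda_4\equiv\ddot\lambda_4\equiv0$, together with $\kappa\equiv0$ and $\sin(\theta-\phi)\equiv0$. By continuity at $\zeta_2$ this gives $\kappa(\zeta_2)=0$, $\lambda_3(\zeta_2)=0$, $\lambda_4(\zeta_2)=\dot\lambda_4(\zeta_2)=\ddot\lambda_4(\zeta_2)=0$ and $\sin(\theta(\zeta_2)-\phi)=0$, and since $\kappa(\zeta_2)=0$ the switching-function equation \eqref{eq:costate} yields $\dddot\lambda_4(\zeta_2)=0$ as well. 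Evaluating \eqref{H_const2} on the singular arc fixes the Hamiltonian constant as $h=b+\rho\cos(\theta-\phi)$, and $\sin(\theta(\zeta_2)-\phi)=0$ leaves only the two orientations $\theta(\zeta_2)=\phi$ (so $\cos(\theta-\phi)=+1$) and $\theta(\zeta_2)=\phi+\pi$ (so $\cos(\theta-\phi)=-1$).

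Next I would propagate $\lambda_4$ into the bang arc from this vanishing Cauchy data. On $[\zeta_2,\zeta_2+\delta]$ one has $\dot\kappa=\epsilon b$ and, since $\kappa(\zeta_2)=0$, $\theta(t)-\phi=(\theta(\zeta_2)-\phi)+\tfrac{\epsilon b}{2}s^2$ with $s=t-\zeta_2$. Using $\dot\lambda_4=-\lambda_3$ from \eqref{adjoint4} and $\dot\lambda_3=\rho\sin(\theta-\phi)$ from \eqref{adjoint3a}, integrating $\ddot\lambda_4=-\rho\sin(\theta-\phi)$ twice from the vanishing data produces a leading term $-\tfrac{\rho\,\epsilon b}{24}\,s^4$ when $\theta(\zeta_2)=\phi$ and $+\tfrac{\rho\,\epsilon b}{24}\,s^4$ when $\theta(\zeta_2)=\phi+\pi$. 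The bang arc requires $v=-\sgn\lambda_4=\epsilon$ for small $s>0$; substituting the leading term, this is self-consistent exactly when $\cos(\theta(\zeta_2)-\phi)=+1$ and fails for either choice of $\epsilon$ when $\cos(\theta(\zeta_2)-\phi)=-1$. Hence the anti-aligned orientation $\theta(\zeta_2)=\phi+\pi$ is already excluded by \eqref{v}.

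The main obstacle is excluding the surviving, aligned orientation $\theta(\zeta_2)=\phi$: a bang arc is locally compatible with the first-order conditions in this case, so the contradiction cannot come from \eqref{v} or from \eqref{eq:costate} by a unique-continuation argument (indeed $\lambda_4$ vanishes only to third order at $\zeta_2$, its fourth derivative jumping to the nonzero value $-\rho\,\epsilon b$, so the fit is merely $\mathcal{C}^3$ and analyticity on the bang arc does not force $\lambda_4\equiv0$). To close this case I would invoke a higher-order necessary condition. Differentiating the switching quantity $\partial H/\partial v=b\lambda_4$ repeatedly, the control $v$ first reappears at the fourth derivative through $\dot\kappa=bv$, so the singular arc has order two and $\tfrac{\partial}{\partial v}\tfrac{d^4}{dt^4}(b\lambda_4)=-b^2\rho\cos(\theta-\phi)$. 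Since the cited maximum principle supplies only first-order information, I would establish directly the generalized Legendre--Clebsch (Kelley) condition for a minimizing order-two singular arc, namely $(-1)^2\,\tfrac{\partial}{\partial v}\tfrac{d^4}{dt^4}(b\lambda_4)\ge0$, i.e. $\cos(\theta-\phi)\le0$; this contradicts the aligned value $\cos(\theta-\phi)=+1$. With both orientations excluded, no bang arc can abut $[\zeta_1,\zeta_2]$, and the same reasoning applies at $\zeta_1$, so the maximal singular interval is all of $[0,t_f]$, which proves total singularity.
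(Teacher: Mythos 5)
Your proposal is correct in substance, but it is not the paper's argument---and the comparison is instructive, because your third paragraph pinpoints exactly the step at which the paper's own proof is too quick. The paper argues as follows: it collects the same junction data $\lambda_4(\zeta_2)=\dot\lambda_4(\zeta_2)=\ddot\lambda_4(\zeta_2)=\kappa(\zeta_2)=0$, freezes $\sgn(\lambda_4)$ on the adjacent arc (no chattering), observes that \eqref{eq:costate} is then ``linear'', and concludes that the forward solution from zero initial data vanishes identically, so that a switch at $\zeta_2$ is impossible. But \eqref{eq:costate} is affine, not homogeneous: evaluating \eqref{H_const2} on the singular arc gives $h-b=\rho\cos(\theta(\zeta_2)-\phi)=\pm\rho\neq 0$, so on the putative bang arc the forcing term $-\kappa(t)(h-b)$, with $\kappa(t)=\epsilon b\,(t-\zeta_2)$, is nonzero, and the zero-data solution is not zero but $\mp\tfrac{\rho\epsilon b}{24}(t-\zeta_2)^4+O\bigl((t-\zeta_2)^8\bigr)$---precisely your expansion. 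Consequently, in the anti-aligned case $\cos(\theta(\zeta_2)-\phi)=-1$ the paper's conclusion (no junction) survives, but for a different reason than stated, namely the sign contradiction with \eqref{v} that you exhibit; while in the aligned case $\cos(\theta(\zeta_2)-\phi)=+1$ a bang arc is fully consistent with every first-order condition, so no argument of the paper's unique-continuation type can close that case. Your appeal to the generalized Legendre--Clebsch condition, with the correct order $q=2$ and the correct quantity $\tfrac{\partial}{\partial v}\tfrac{d^4}{dt^4}H_v=-b^2\rho\cos(\theta-\phi)$, legitimately eliminates the aligned orientation, and your closing remark dovetails with the known theory of Fuller-type junctions: a bang arc can meet a GLC-compliant second-order singular arc only through chattering, which is exactly the behaviour the paper observes numerically in Example~3b and which explains why the no-chattering hypothesis is indispensable.

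Two caveats. First, do not plan to ``establish directly'' the generalized Legendre--Clebsch condition: for an order-two singular arc this is a deep classical result (Kelley--Kopp--Moyer; alternatively Krener's high-order maximum principle), and the honest move is to cite it---a self-contained derivation would be a substantial paper in itself. Second, be aware of the change of scope this entails: GLC is a necessary condition for \emph{optimality}, not for extremality, so your argument proves the lemma exactly as stated (for optimal controls), whereas the paper's intended ODE argument, had it been valid, would have excluded partial singularity for \emph{every} non-chattering extremal---the stronger property that the paper's classification of ``critical'' curves and its numerical verification methodology implicitly rely on.
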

\begin{proof}
Suppose that the optimal control $v(t)$ is singular over $[\zeta_1,\zeta_2]\subseteq[0,t_f]$, i.e., (with $b>0$) $\lambda_4(t) = 0$, for all $t\in[\zeta_1,\zeta_2]$.  By Equations \eqref{adjoint3}--\eqref{adjoint4}, $\dot{\lambda}_4$ and $\ddot{\lambda}_4$ are continuous. Then one also has that $\dot{\lambda}_4(t) = \ddot{\lambda}_4(t) = 0$, and, by Lemma~\ref{lem:singular_inactive}, $\kappa(t) = 0$, for all $t\in[\zeta_1,\zeta_2]$.  Suppose that a switching occurs at $t = \zeta_2$, from the singular arc to a nonsingular arc, which has no chattering.  Then, in the nonsingular arc, for $(\zeta_2, \zeta_2^+)$, i.e., along a ``tiny'' initial segment of the nonsingular arc, $\lambda_4(t)$ is indeed nonzero, and we have $\kappa(t) = -b\,\sgn(\lambda_4(t))\,t + c_i$, for $(\zeta_2, \zeta_2^+)$, by Lemma~\ref{lem:lambda4_DE}.  Moreover, since $\kappa$ is continuous (along the overall trajectory), also by Lemma~\ref{lem:lambda4_DE}, and since $\lambda_4(\zeta_2) = 0$ and $\kappa(\zeta_2) = 0$, we conclude that $c_i = 0$. 
Consequently, the ODE~\eqref{eq:costate} 
in Lemma~\ref{lem:lambda4_DE}, with $\kappa(t) = -b\,\sgn(\lambda_4(t))\,t$ substituted, becomes
\[
\dddot{\lambda}_4(t) = b\,\sgn(\lambda_4(t)) \left(-b\,\sgn(\lambda_4(t))\,\dot{\lambda}_4(t)\,t + b\,|\lambda_4(t)| - b + h\right) t\,.
\]
This is a third-order dynamical system with $(\lambda_4, \dot{\lambda}_4, \ddot{\lambda}_4) = (0,0,0)$ constituting an equilibrium point.  Therefore, ``forward'' solution of 
this ODE with the initial conditions $\lambda_4(\zeta_2) = \dot{\lambda}_4(\zeta_2) = \ddot{\lambda}_4(\zeta_2) = 0$
yields $\lambda_4(t) = 0$ for all $t\in[\zeta_2,\zeta_2^+)$, extending the interval over which $v(t)$ is singular to $[\zeta_1,\zeta_2^+)$.  Via similar arguments, one has that $\lambda_4(\zeta_1) = \dot{\lambda}_4(\zeta_1) = \ddot{\lambda}_4(\zeta_1) = \kappa(\zeta_1) = 0$, and so ``backward'' solution of the above ODE yields $\lambda_4(t) = 0$ for all $t\in(\zeta_1^-,\zeta_1]$, extending the interval over which $v(t)$ is singular. Therefore, switching cannot happen at either endpoint of $[\zeta_1,\zeta_2]$, making the singularity interval $[0,t_f]$.
\end{proof}

Combining the discussion in Remark~\ref{rem:observation}, Remark~\ref{rem:types} incorporating Lemmas~\ref{lem:singular_inactive} and \ref{lem:singular_active}, and Lemma~\ref{lem:tot_singular} and \eqref{v}, one gets the following theorem on the types of critical solutions one obtains.

\begin{theorem}[Classification of critical curves] \label{theo:classification}
If the curvature is unconstrained ($a = \infty$) then, unless there is chattering, an optimal curve for Problem~(P) is (i)~a straight line segment or (ii)~a circular arc or (iii)~a concatenation of spirals.  If the curvature is constrained then an optimal curve for Problem~(P) is a concatenation of a subset of spirals, circular arcs and straight line segments.
\end{theorem}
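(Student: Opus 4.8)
The plan is to decompose the optimal trajectory according to where the switching function $\lambda_4$ vanishes and where the curvature constraints are active, and then to identify each resulting piece with one of the three geometric primitives by means of the lemmas already established. The degenerate case is disposed of first: by Remark~\ref{rem:observation}, $b = 0$ occurs precisely when the whole curve is a single straight line segment or a single circular arc, which are alternatives (i) and (ii), and the remainder proceeds under the standing assumption $b > 0$.

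For $b > 0$ I would work with the decomposition of $(x(t),y(t))$ into the arcs $[t_{i-1},t_i]$ introduced above, arranged so that on each arc the control $v$ is either nonsingular (bang) or singular throughout. On a bang arc one has $\lambda_4 \neq 0$ a.e., so $v = -\sgn(\lambda_4) = \pm 1$ by \eqref{v} and hence $\dot\kappa = b\,v = \pm b$; by Remark~\ref{rem:types} the corresponding segment \eqref{spiral} is an Euler spiral. On a singular arc I would split according to constraint activity: if neither curvature constraint is active, Lemma~\ref{lem:singular_inactive} yields $\kappa \equiv 0$ and hence a straight line segment; if a constraint is active, Lemma~\ref{lem:singular_active} yields $v \equiv 0$ with $\kappa \equiv \pm a$ and hence a circular arc. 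This already establishes the constrained assertion, since every arc is then a spiral, a circular arc, or a straight line segment.

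For the unconstrained case ($a = \infty$) I would sharpen the conclusion using total singularity. As no curvature constraint can become active, Lemma~\ref{lem:tot_singular} forbids any mixing of bang and singular arcs in the absence of chattering: the control is either singular on all of $[0,t_f]$ or bang everywhere. In the former case Lemma~\ref{lem:singular_inactive} forces $\kappa \equiv 0$, whence $b = 0$, contradicting $b > 0$; so for $b > 0$ the control is bang throughout and the curve is a concatenation of spirals, which is alternative (iii). It then remains to note the $\mathcal{C}^2$-regularity of each concatenation: since $x,y,\theta,\kappa \in W^{1,\infty}(0,t_f;\dR)$ they are continuous, so position, tangent angle $\theta$ and curvature $\kappa$ match across every junction $t_i$, exactly the matching of point, tangent and curvature needed for the pieces to form a single $\mathcal{C}^2$ curve.

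The main obstacle is the logical bookkeeping in the unconstrained case rather than any single computation: one must confirm that Lemma~\ref{lem:tot_singular} genuinely rules out a partially singular (bang--singular) structure and that the chattering hypothesis is invoked precisely where that lemma requires it, so that the alternatives (i)--(iii) are mutually exhaustive. In the constrained case no such exclusion is available (nor claimed), and the work reduces to confirming that the three primitives exhaust the per-arc behaviour and join $\mathcal{C}^2$-continuously.
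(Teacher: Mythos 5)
Your proposal is correct and follows essentially the same route as the paper, which obtains the theorem precisely by combining Remark~\ref{rem:observation} (the $b=0$ cases), Remark~\ref{rem:types} with Lemmas~\ref{lem:singular_inactive} and~\ref{lem:singular_active} (the per-arc identification of bang arcs as spirals and singular arcs as lines or circular arcs), Lemma~\ref{lem:tot_singular} (excluding mixed bang--singular structure in the unconstrained, chatter-free case), and~\eqref{v}. Your write-up merely makes this combination explicit, including the $\mathcal{C}^2$-matching at junctions, which the paper leaves implicit.
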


\subsection{Unspecified curvatures at endpoints}

Suppose that the curvatures at the endpoints are not specified, i.e., $\kappa(0)$ and $\kappa(t_f)$ are free.  Then the only difference the optimality conditions in this case have, compared with the case when $\kappa(0)$ and $\kappa(t_f)$ are specified (i.e., fixed), is that one has to impose the transversality conditions
\begin{equation}  \label{transversality}
\lambda_4(0) = 0\quad\mbox{and}\quad \lambda_4(t_f) = 0\,,
\end{equation}
on the differential equation in~\eqref{adjoint4}.  Although, as expected, the solution curves with the free and fixed curvatures at the endpoints are in general different, the lemmas and Theorem~\ref{theo:classification} stated above remain unchanged.

\section{Numerical Methods and Experiments}
\label{sec:num_meth}

In this section, we implement numerical methods to find an approximate solution to Problem~(P), employing (i) {\em direct discretization techniques}, which in the literature have extensively been studied~\cite{AltBaiLemGer2013, Betts2020, DonHag2001, DonHagMal2000, DonHagVel2000, PieScaVel2018} and implemented~\cite{BanKay2013, BurCalKay2024, BurKayMou2024, KayMau2023, KayNoa2013} and (ii)~{\em arc}, or {\em switching time}, {\em parametrization techniques} earlier used for optimal control problems exhibiting discontinuous controls~\cite{KayNoa2003, MauBueKimKay2005}---also see \cite{AghHag2021,Hager2023} for more recent research in this area.

Problem~(OC) or Problem~(Pc) can be re-written in a concise form as
\[
\mbox{(Pgen)}\left\{\begin{array}{rl}
\ds\min_{u(\cdot)} &\ \ds b  \\[3mm]
\mbox{s.t.} &\ \dot{s}(t) = f(s(t),u(t))\,,\ \ s(0) = \overline{s}_0\,,\ s(t_f) = \overline{s}_f\,, \\[2mm] 
  &\ -b \le u(t) \le b\,,\ \ -a \le \kappa(t) \le a\,, \ \mbox{for a.e.\ }  t\in[0,t_f]\,,
\end{array}\right.
\]
where $s(t) := (x(t), y(t), \theta(t), \kappa(t))$ is the vector of state variables.  We note that, without any loss of generality, some components of the initial and terminal states may be opted to be not specified.

In what follows, we provide a review of the direct discretization and arc parametrization techniques as applied to Problem~(Pgen).  The descriptions of the way these techniques are implemented should in particular facilitate the {\em reproducibility} of the numerical experiments that we present in Section~\ref{sec:experiments}.

\subsection{Numerical methods}

We review here two numerical approaches to solving Problem~(OC).  The first approach, i.e., direct discretization of Problem~(OC) using the Euler scheme, results in a large-scale finite-dimensional optimization problem, the solution of which yields the structure of the solution, namely the number and order of the concatenation of the arcs, along with the places of the switchings/junctions at low precision.  Then the second approach, i.e., an arc parametrization technique, uses the known solution structure to find the junction, or switching, times at high precision.

\subsubsection{Direct discretization}
\label{sec:dir_discr}

We carry out discretization over a partition $\pi : \{t_0, t_1, t_2, \ldots, t_{N}\}$ of the time horizon $[0,t_f]$, such that
\[
0 = t_0 < t_1 < t_2 < \cdots < t_{N-1} < t_N = t_f\,,
\]
where $N$ is the number of discretization steps.  Over this partition, approximations of the state and control variables in Problem~(Pgen) are given by $s_i = (x_i, y_i, \theta_i, \kappa_i) \approx s(t_i)$, $i = 0,1,\ldots,N$, and $u_i\approx u(t_i)$, $i = 0,1,\ldots,N-1$. For simplicity, we take a regular partition $\pi$, namely define the step-size $h := t_f/N = t_{i+1} - t_i$, $i = 0,1,\ldots,N-1$.  Define $s_\pi := (s_0,s_1,\ldots,s_N)$ and  $u_\pi := (u_0,u_1,\ldots,u_{N-1})$, which approximate the functions $s(\cdot)$ and $u(\cdot)$, respectively.

\begin{remark}[Runge--Kutta discretization] \label{rem:discretization} \rm
By Theorem~\ref{theo:classification} and Remark~\ref{rem:types}, an optimal curve is a concatenation of various types of arcs; in other words, an optimal control variable is a piecewise-continuous function.  Therefore the state variable vector $s$ in Problem~(Pgen) above is of class $C^0$, deeming the Euler discretization scheme (which is an order-one and therefore the simplest Runge--Kutta method) to be more suitable than higher-order Runge--Kutta discretization schemes, in a direct discretization approach.  When we employed the trapezoidal rule, which is an order-two Runge--Kutta method, for example, we observed that not only the discrete solution obtained was not an (approximately) optimal solution (i.e., it did not numerically verify the maximum principle) in the case when the curvature was not constrained, but also, in the case when the curvature constraint was active, the (discrete) control variable exhibited severe {\em numerical chatter}.
\proofbox
\end{remark}

The Euler discretization of Problem~(Pgen) is posed as follows.
\[
\mbox{(P$_h$)}\left\{\begin{array}{rll}
\ds\min_{s_\pi,u_\pi,b} &\ \ds b & \\[3mm]
\mbox{s.t.} &\ s_{i+1} = s_i + h\,f(s_i,u_i)\,, & \ s_0 = \overline{s}_0\,,\ s_N = \overline{s}_f\,, \\[2mm] 
  &\ -b \le u_i \le b\,, & \ i =  0,1,\ldots,N - 1\,, \\[2mm]
  &\ -a \le \kappa_i \le a\,, & \ i =  0,1,\ldots,N\,.
\end{array}\right.
\]
For a typical optimal control problem, the partition size $N$ is large (usually in the order of thousands) in order to get a {\em reasonably accurate} solution; so, Problem~(P$_h$) constitutes a large-scale finite-dimensional optimization problem.  For finding, in the best case, a locally optimal solution of Problem~(P$_h$), various well-established general nonlinear programming software are available, such as Algencan~\cite{Andreani2007,BirMar2014}, which implements augmented Lagrangian techniques; Ipopt~\cite{WacBie2006}, which implements an interior point method; SNOPT~\cite{GilMurSau2005}, which implements a sequential quadratic programming algorithm; Knitro~\cite{Knitro}, which implements various interior point and active set algorithms to choose from.

\subsubsection{Arc parametrization}
\label{sec:arc_param}

We observe that, by \eqref{v}, $u(t)\in\{-b,0,b\}$, for $t\in[0,t_f]$.  Suppose that the optimal curve $(x(t),y(t))$ is a concatenation of $n_a$ arcs.  Then $u(t)$ has a jump at the {\em switching times} $t_k$, $k = 1,\ldots,n_a-1$, such that
\[
0 = t_0 < t_1 < \cdots < t_{n_a-1} < t_{n_a} = t_f\,.
\]
Along the $k$th arc, suppose that $u(t) =: \overline{u}_k$ for $t\in[t_{k-1},t_k]$, $k = 1,\ldots,n_a$, with $\overline{u}_k\in\{-b,0,b\}$, and define the state variable vector as $s^{(k)}(t) := s(t)$ for $t\in[t_{k-1},t_k]$.  Now the differential equations in Problem~(Pgen) can be written for the $k$th arc as
\begin{equation}  \label{eq:ODE_arc}
\dot{s}^{(k)}(t) = f(s^{(k)}(t),\overline{u}_k)\,,\ \mbox{for } t\in[t_{k-1},t_k]\,,\ k = 1,\ldots,n_a\,,
\end{equation}
along with the boundary conditions $s^{(1)}(0) = \overline{s}_0$, $s^{(n_a)}(t_f) = \overline{s}_f$, and the continuity conditions $s^{(k)}(t_{k-1}) = s^{(k-1)}(t_{k-1})$, $k = 2,\ldots,n_a$, for each of the $n_a$ ODEs in~\eqref{eq:ODE_arc}.  We can  interpret the system of $n_a$ ODEs given above with a slightly different viewpoint as follows: $s^{(1)}(0) = \overline{s}_0$ is the initial condition of the ODE in~\eqref{eq:ODE_arc} for $k = 1$, and the continuity  condition $s^{(k)}(t_{k-1}) = s^{(k-1)}(t_{k-1})$ constitutes the initial condition of the ODE in~\eqref{eq:ODE_arc} for $k = 2,\ldots,n_a$.

The $n_a$ ODEs described above can be spliced together by means of an {\em arc parametrization} as follows.  Let $\xi_k$ denote the length of the $k$th arc; namely that $\xi_k := t_k - t_{k-1}$, $k = 1,\ldots,n_a$.  As in ~\cite[Section~4.1]{KayNoa2003} or \cite[Section~4]{MauBueKimKay2005}, the interval $[t_{k-1},t_k]$ is mapped to the fixed interval $[(k-1)/n_a,k/n_a]$, and so $[0,t_f]$ is mapped to the fixed interval $[0,1]$.  Now the state equations in~\eqref{eq:ODE_arc} can be re-written as a single ODE in the state variable vector $s(t)$, parametrized w.r.t. the arc-lengths $\xi_k$, as
\begin{subequations}
	\begin{eqnarray}
\label{eq:ODE_arc_param}
\dot{s}(\tau) = n_a\,\xi_k\,f(s(\tau),\overline{u}_k)\,,\ \mbox{for } \tau\in[(k-1)/n_a,k/n_a]\,,\ k = 1,\ldots,n_a\,, \label{eq:ODE_arc_param_a} \\[1mm]
s(0) = \overline{s}_0\,,\ s(1) = \overline{s}_f\,, \label{eq:ODE_arc_param_b}
	\end{eqnarray}
\end{subequations}
where the continuity conditions are readily satisfied, as any solution $s(\cdot)$ of the ODE in~\eqref{eq:ODE_arc_param_a} is continuous over $[0,1]$.  Once the sequence $(\overline{u}_k)_{k=1}^{n_a}$ is provided (conceivably after solving Problem~(P$_h$)), the control constraints are no longer needed, and we have to solve the following problem to find an optimal bound $b$ and optimal arc lengths $(\xi_1,\ldots,\xi_{n_a}) =: \xi$, at a high precision.
\[
\mbox{(Pa)}\left\{\begin{array}{rl}
\ds\min_{b,\xi} &\ \ds b \\[3mm]
\mbox{s.t.} &\ \mbox{A high-order Runge--Kutta discretization of~\eqref{eq:ODE_arc_param_a}}\,, \\[2mm]
& \ s_0 = \overline{s}_0\,,\ s_N = \overline{s}_f\,, \\[2mm]
&\ -a \le \kappa_i \le a\,,\ \ i =  0,1,\ldots,N\,. 
\end{array}\right.
\]
In Problem~(Pa), $N$ is required to be an integer multiple of $n_a$.

\begin{remark}[High-order Runge--Kutta discretization] \rm
Detailed information on high-order\linebreak Runge--Kutta discretization schemes for optimal control problems can be found in \cite{BonLau2006,HaiLubWan2006,Kaya2010}.
\proofbox
\end{remark}

\subsection{Numerical experiments}
\label{sec:experiments}

Problems~(P$_h$) and (Pa) can be solved by the AMPL--Knitro suite.  AMPL \cite{AMPL} is an optimization modelling language, in which we employ Knitro~\cite{Knitro} (version 13.0.1 is used here).  For Problem~(P$_h$) we set $N = 2000$, the Knitro parameters {\tt alg=0} (meaning that it is left to Knitro to choose an appropriate algorithm), {\tt feastol=1e-12}, {\tt opttol=1e-12}, and for Problem~(Pa) we use the Runge--Kutta scheme, the sixth-order Gauss--Legendre discretization, and set $N = 400$ (or the nearest integer multiple of $n_a$ to $400$), {\tt alg=0}, {\tt feastol=1e-14}, {\tt opttol=1e-14}.

The Lagrange multipliers of the discretized ODE constraints in Problem~(P$_h$) in an AMPL code constitute nothing but the discretized adjoint variables of the optimal control problem~(Pgen) and they are provided by AMPL.  These variables facilitate a numerical verification of the necessary optimality conditions, namely the maximum principle.  For example, the knowledge about the switching function $\lambda_4$ allows one to check whether the control variable obtained via optimization verifies~\eqref{v} or not.  For the case when the curvature is constrained, it is also important to verify the complementarity conditions in~\eqref{eq:mu1}--\eqref{eq:mu2}, and this can be achieved by rearranging~\eqref{adjoint4} as
\begin{equation}  \label{eqn:mu1mu2}
\mu_1(t) - \mu_2(t) = \dot{\lambda}_4(t) + \lambda_3(t)\,,
\end{equation}
evaluating $\dot{\lambda}_4$ via a (forward) difference formula using $\lambda_4$ (provided by AMPL as explained above), substituting the value of $\lambda_3$, and making use of the fact that if $\mu_i(t) > 0$ then $\mu_j(t) = 0$ with $i\neq j$.

The graphs of the functions for both of the examples below are obtained by plotting their discrete approximations found by solving Problem~(P$_h$).  The (approximate) discrete solutions reported by the suite as ``locally optimal'' are referred to here as ``critical'', as we have also checked that they numerically (i.e., approximately) verify the necessary conditions of optimality furnished by the maximum principle.

\subsubsection{Example 1: unconstrained curvature}
\label{sec:ex1}

Consider Problem~(OC), equivalently Problem~(Pc) or Problem~(P), with
\[
(x_0,y_0,\theta_0,\kappa_0) = (0,0,-\pi/3,0)\quad\mbox{and}\quad
(x_f,y_f,\theta_f,\kappa_f) = (0.4,0.4,-\pi/6,0)\,,
\]
and large enough $a$ (or $a = \infty$) so that the curvature constraints never become active.  We recall that the oriented endpoints $(x_0,y_0,\theta_0) = (0,0,-\pi/3)$ and $(x_f,y_f,\theta_f) = (0.4,0.4,-\pi/6)$ were used for constructing curvature-constrained curves minimizing length in \cite{Kaya2017} and fixed-length curves minimizing the $L^\infty$-norm of curvature in \cite{KayNoaSch2024}.  Here, we fix the curve length to be $t_f = 2$.

Four critical curves of minimax spirality, i.e., curves satisfying (or numerically verifying) the maximum principle for Problem~(OC), that were encountered are depicted in Figure~\ref{fig:crit_curves_ex1}(a), by solving Problem~(P$_h$) employing direct discretization, using the AMPL--Knitro computational suite.  Plots of the control variable $u(t)$, which is the derivative of curvature, for each curve is shown in Figure~\ref{fig:crit_curves_ex1}(b).  We observe that the control is of bang--bang type, as expected by Theorem~\ref{theo:classification}, and (it turns out) that the control variable has three switchings for each of these four curves.  In other words, four spirals are concatenated to make any of these critical curves.

\begin{figure}[t!]
\begin{center}
\includegraphics[width=160mm]{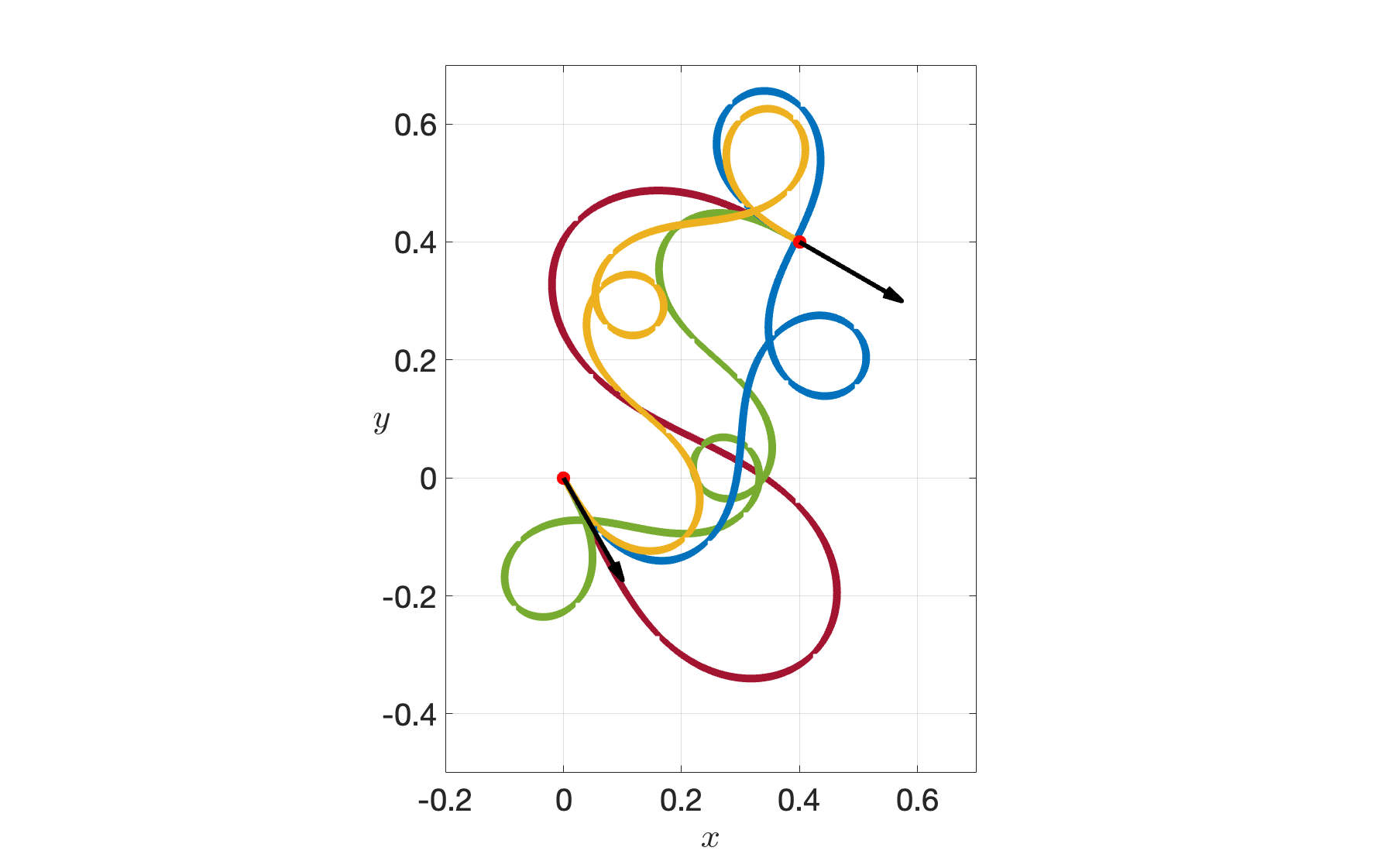} \\[0mm]
{\sf\small\hspace{7mm} (a) Critical curves of minimax spirality.}
\end{center}
\begin{center}
\includegraphics[width=145mm]{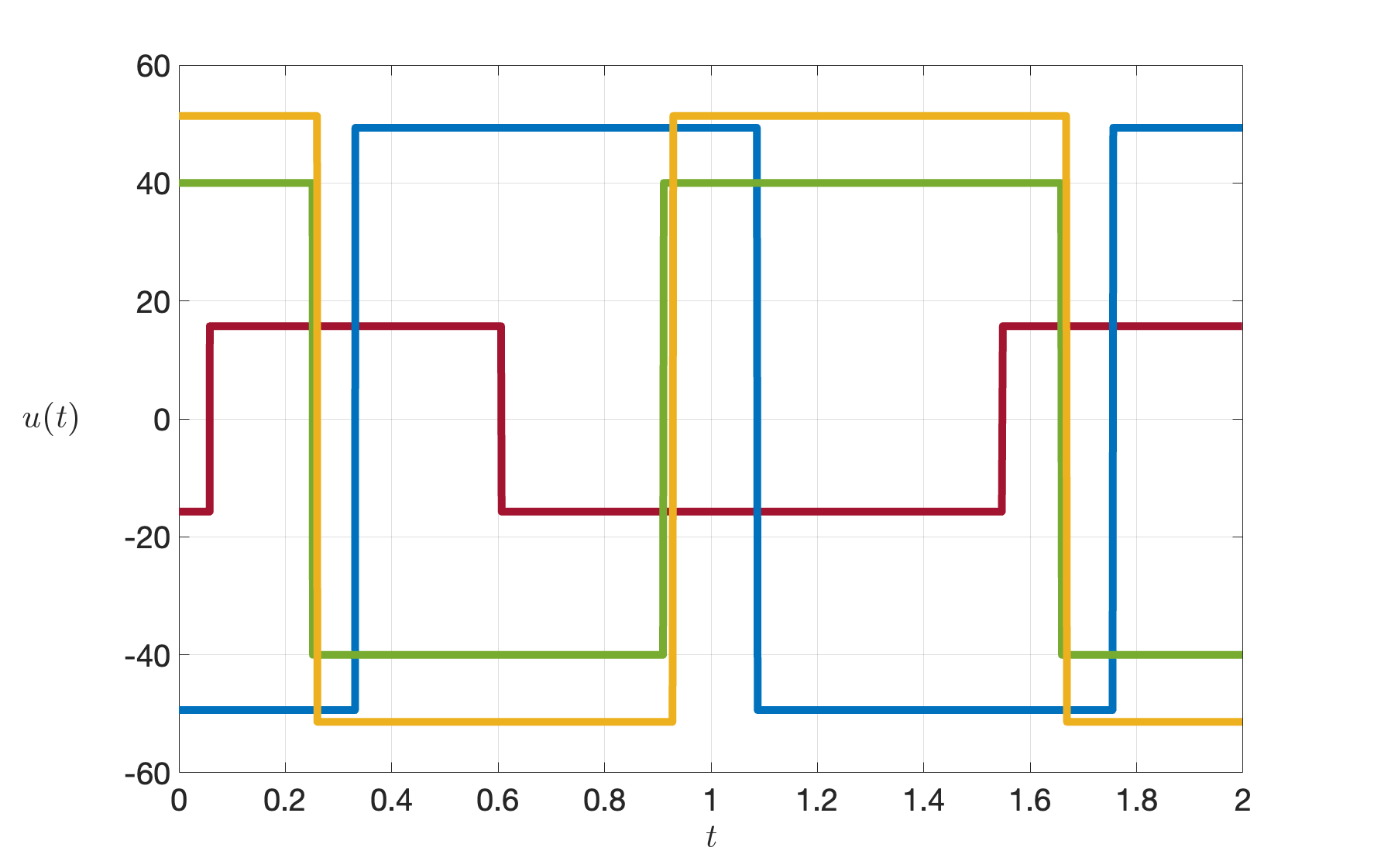} \\[0mm]
{\sf\small\hspace{7mm} (b) Control variable $u(t) = \dot{\kappa}(t)$.}
\end{center}
\
\caption{\sf Example 1---Critical curves of length $t_f = 2$ and of minimax spirality between the oriented points $(0,0,-\pi/3)$ and $(0.4,0.4,-\pi/6)$, and with $\kappa(0) = \kappa(2) = 0$.  Each curve satisfies the maximum principle and therefore is critical.  The curve in dark red has the smallest derivative of curvature, $b \approx 15.73$.}
\label{fig:crit_curves_ex1}
\end{figure}

\begin{figure}[t!]
\begin{center}
\includegraphics[width=140mm]{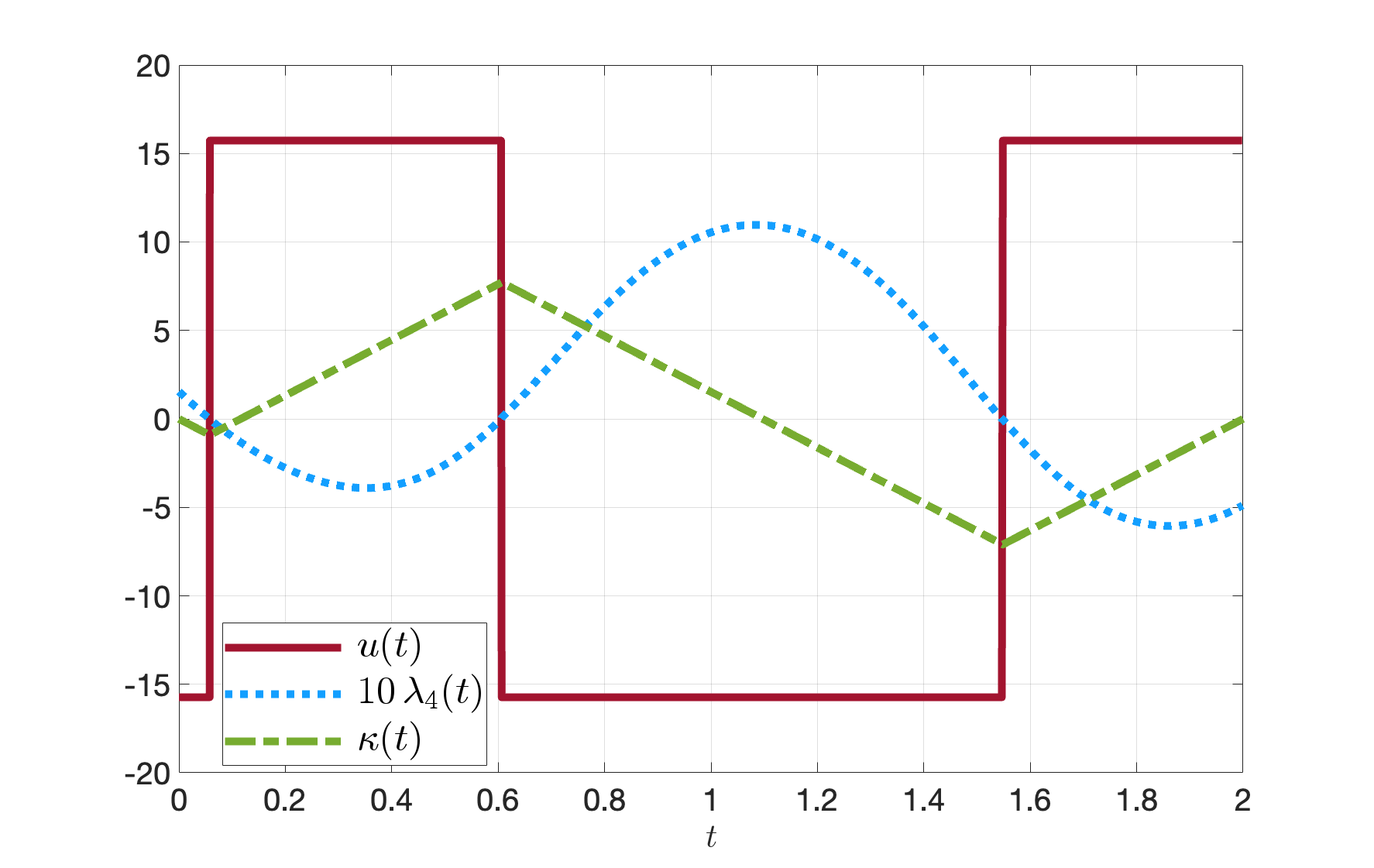}
\end{center}
\
\caption{\sf Example 1---``Best'' critical solution with verification of the necessary optimality condition that $u(t) = -b\,\sgn(\lambda_4(t))$.  Note that $\kappa(t)$ is affine in $t$ along each spiral arc.}
\label{fig:best_crit_curve_ex1}
\end{figure}

For the curves shown, using the solution structure given by the sequences $(\overline{u}_k)_{k=1}^4$, we used the arc parameterization approach and solved Problem~(Pa), by employing the AMPL--Knitro computational suite, to find the (optimal) bound $b$ on the derivative of curvature, in the order of increasing values, as $15.733062270883$, $40.016886269449$, $49.380682469500$ and $51.368649667030$, correct to 12~dp (decimal places).  Below we list the minimum bound on the curvature derivative, the arc lengths, and the switching times, of the best of these critical curves that was found as
\begin{eqnarray*}
b &=& 15.733062270883\,, \\
(\xi_1,\xi_2,\xi_3,\xi_4) &=& (0.058051025764,\ 0.548479899032,\ 0.941948974236,\ 0.451520100968)\,, \\
(t_1,t_2,t_3) &=& (0.058051025764,\ 0.606530924796,\ 1.548479899032)\,.
\end{eqnarray*}

The switching function of the best critical (solution) curve as well as the control variable is plotted in Figure~\ref{fig:best_crit_curve_ex1}.  The plot verifies numerically the optimality condition $u(t) = -b\,\sgn(\lambda_4(t))$ from~\eqref{v}, as well as the fact that the curvature $\kappa(t)$ is affine in $t$.

\subsubsection{Example 2: constrained curvature}
\label{sec:ex2}

Consider the same problem as in Example~1, except that the curvatures at the endpoints, namely $\kappa(0)$ and $\kappa(2)$, are free, i.e. that they are not specified, and that the (signed) curvature is constrained by imposing the lower and upper bounds of $-5$ and $5$, namely that $|\kappa(t)| \le 5$, for $t\in[0,t_f]$.  

After extensive numerical experiments, by solving Problem~(P$_h$), only one critical curve of minimax spirality was encountered, which is displayed in Figure~\ref{fig:crit_curve_ex2}. The structure of the solution is found as {\em bang--boundary--bang--boundary}, with three switching/junction times $t_1$, $t_2$ and $t_3$, and the sequence of constant control values $(\overline{u}_k)_{k=1}^4 = \{b,0,-b,0\}$, namely that
\[
u(t) := \left\{\begin{array}{rl}
b\,, & \ \mbox{if } 0 \le t < t_1\,, \\
0\,, & \ \mbox{if } t_1 \le t < t_2\,, \\
-b\,, & \ \mbox{if } t_2 \le t < t_3\,, \\
0\,, & \ \mbox{if } t_3 \le t \le 2\,. \\
\end{array} \right.
\]

\begin{figure}[t!]
\begin{center}
\includegraphics[width=160mm]{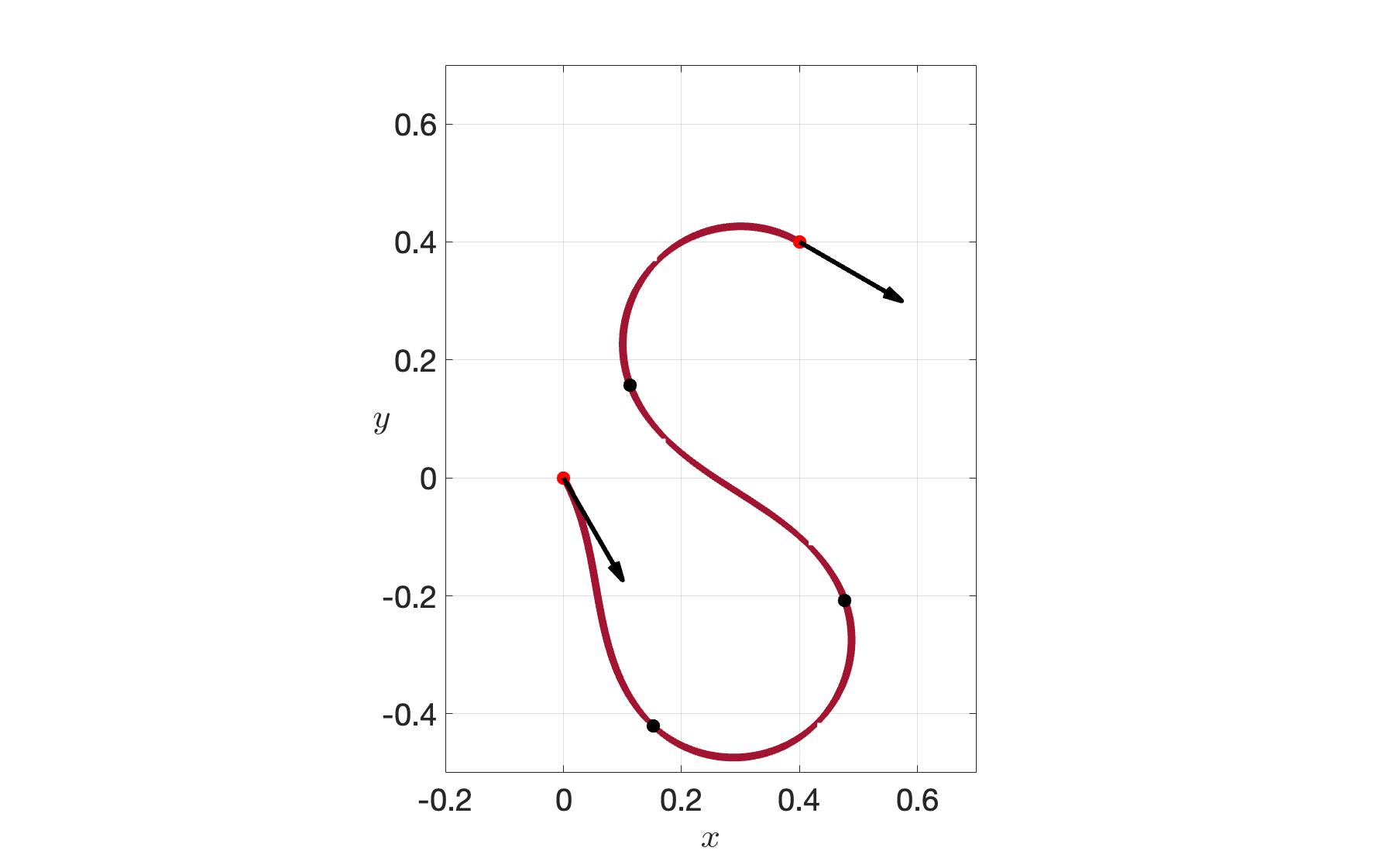}
\end{center}\
\caption{\sf Example 2---Critical curve of length $t_f = 2$ and of minimax spirality between the oriented points $(0,0,-\pi/3)$ and $(0.4,0.4,-\pi/6)$, with constraint $|\kappa(t)| \le 5$.  The three junctions/switching points are indicated by black dots.  One gets $b\approx 19.01$.}
\label{fig:crit_curve_ex2}
\end{figure}

Using the solution structure given above, a numerical solution of the arc-parametrizing problem~(Pa) yields the high-precision values of the optimal curvature bound, arc lengths and switching/junction times as follows.
\begin{eqnarray*}
b &=& 19.012850374851\,, \\
(\xi_1,\xi_2,\xi_3,\xi_4) &=& (0.454980338573,\ 0.531189265997,\ 0.525960064001,\ 0.487870331429)\,, \\
(t_1,t_2,t_3) &=& (0.454980338573,\ 0.986169604570,\ 1.512129668571)\,.
\end{eqnarray*}

The graphs of the switching function $\lambda_4(t)$ (in fact, a scalar multiple of $\lambda_4(t)$ for easier viewing), the control variable $u(t)$ and the curvature $\kappa(t)$ are plotted in Figure~\ref{fig:crit_curve_optimality_ex2}(a).  It is readily observed from the plots that the optimality condition $u(t) = -b\,\sgn(\lambda_4(t))$ is verified.  It is also observed that when the curvature constraint $|\kappa(t)| \le 5$ is active, that is when $t_1\le t < t_2$ or $t_3 \le t \le 2$, one has that $\lambda_4(t) = 0$ and $u(t) = 0$. It is also verified that $\kappa(t)$ is affine in $t$ along each spiral arc.  The graphs of the constraint multiplier functions $\mu_1(t)$ and $\mu_2(t)$ are plotted in Figure~\ref{fig:crit_curve_optimality_ex2}(b) by using \eqref{eqn:mu1mu2} and its ensuing explanation.  Finally, the complementarity conditions \eqref{eq:mu1}--\eqref{eq:mu2} are verified by these plots.

\begin{figure}[t!]
\begin{center}
\includegraphics[width=160mm]{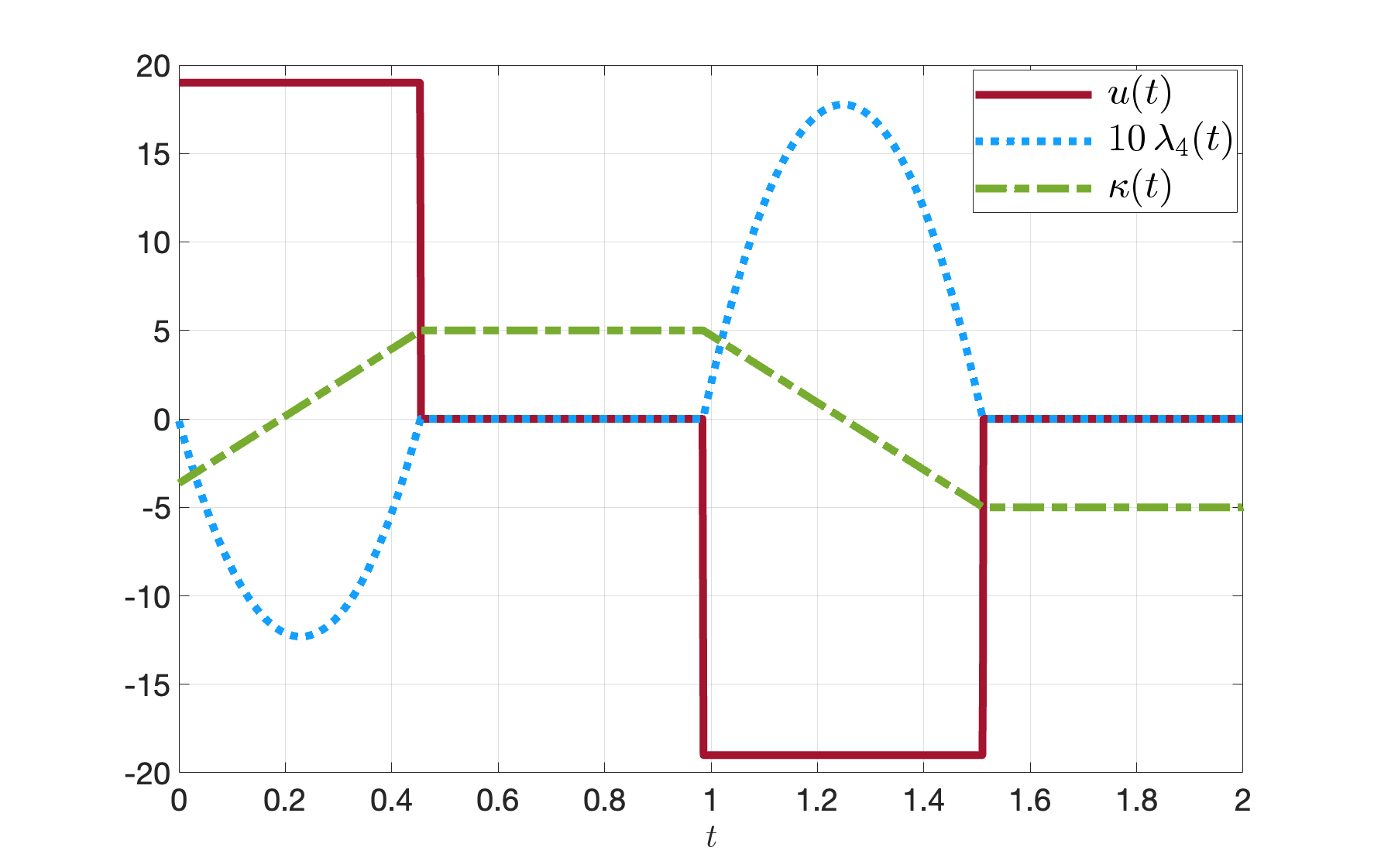} \\[0mm]
{\sf\small\hspace{7mm} (a) Verification of (i) $u(t) = -b\,\sgn(\lambda_4(t))$, (ii) $\lambda_4 = 0$ and $u(t) = 0$ along an active curvature constraint and (iii) $\kappa(t)$ is affine in $t$ along each spiral arc.}
\end{center}
\begin{center}
\includegraphics[width=160mm]{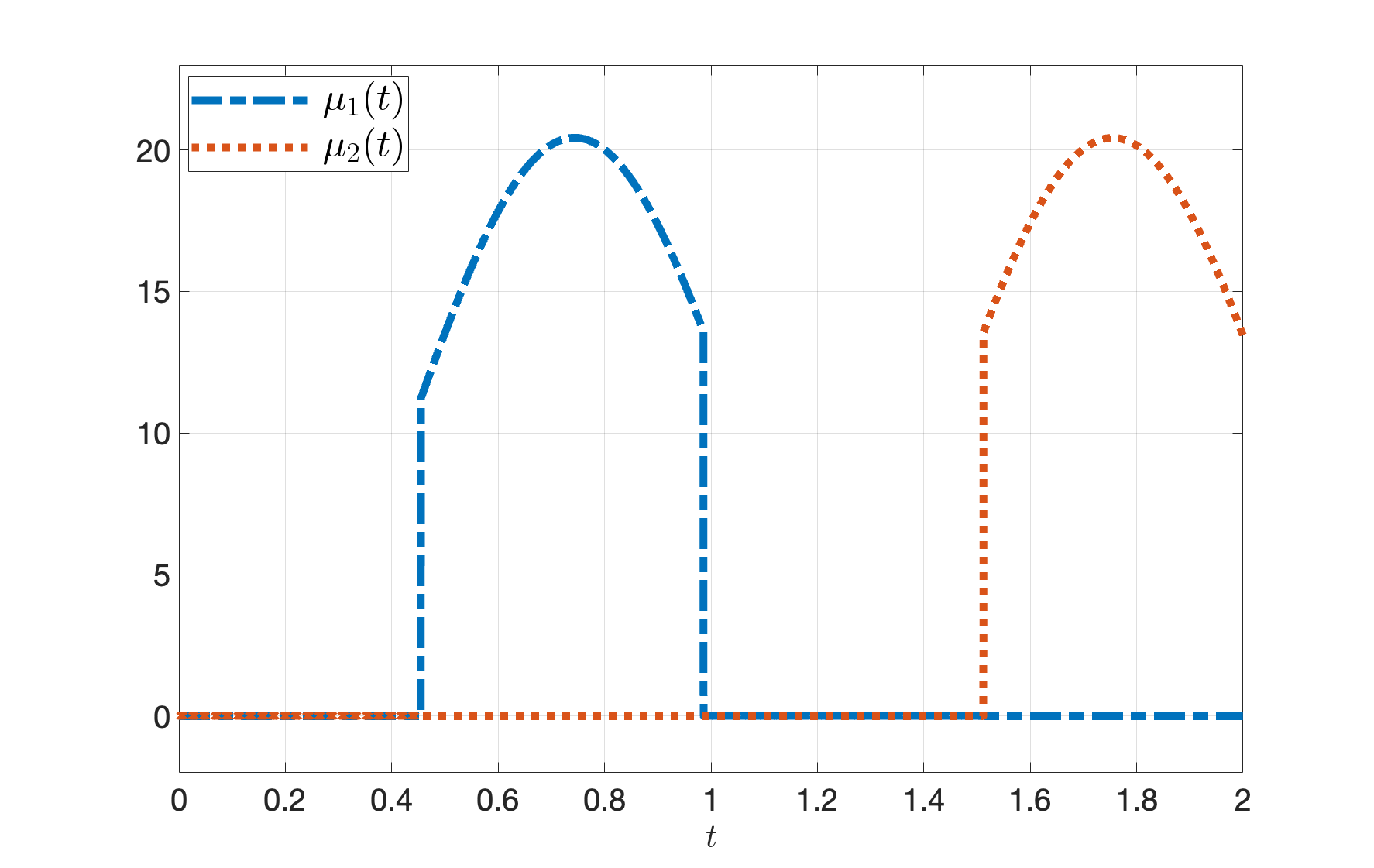} \\[0mm]
{\sf\small\hspace{7mm} (b) Verification of the complementarity conditions \eqref{eq:mu1}--\eqref{eq:mu2}.}
\end{center}
\
\caption{\sf Example 2---Numerical verification of the necessary optimality conditions.}
\label{fig:crit_curve_optimality_ex2}
\end{figure}

\subsubsection{Example 3: joining two circular arcs}
\label{sec:ex2}

Consider a modification of the problem in Example~1, with
\[
(x_0,y_0,\theta_0,\kappa_0) = (0,0,\pi/3,5)\quad\mbox{and}\quad
(x_f,y_f,\theta_f,\kappa_f) = (0.4,0.4,\pi/4,2)\,,
\]
the length $t_f = 0.6$, and and no path constraint is imposed on the curvature.  One can imagine here that one would like to join two circular arcs, one circular arc with radius $0.2$ at one endpoint and another one with radius $0.5$ at the other endpoint---see Figure~\ref{fig:crit_curves_ex3a}(a) for a visualization.  We refer to this first instance as Example~3a.

The solution curve, obtained by solving Problem~(P$_h$), is depicted in Figure~\ref{fig:crit_curves_ex3a}(a), and a plot of the control variable is shown in Figure~\ref{fig:crit_curves_ex3a}(b).  As can be seen from the plot, the control function has four switchings; so, the solution curve is a concatenation of five spirals.  The points where these spirals are concatenated are displayed by four black dots in Figure~\ref{fig:crit_curves_ex3a}(a).  

\begin{figure}[t!]
\begin{center}
\includegraphics[width=160mm]{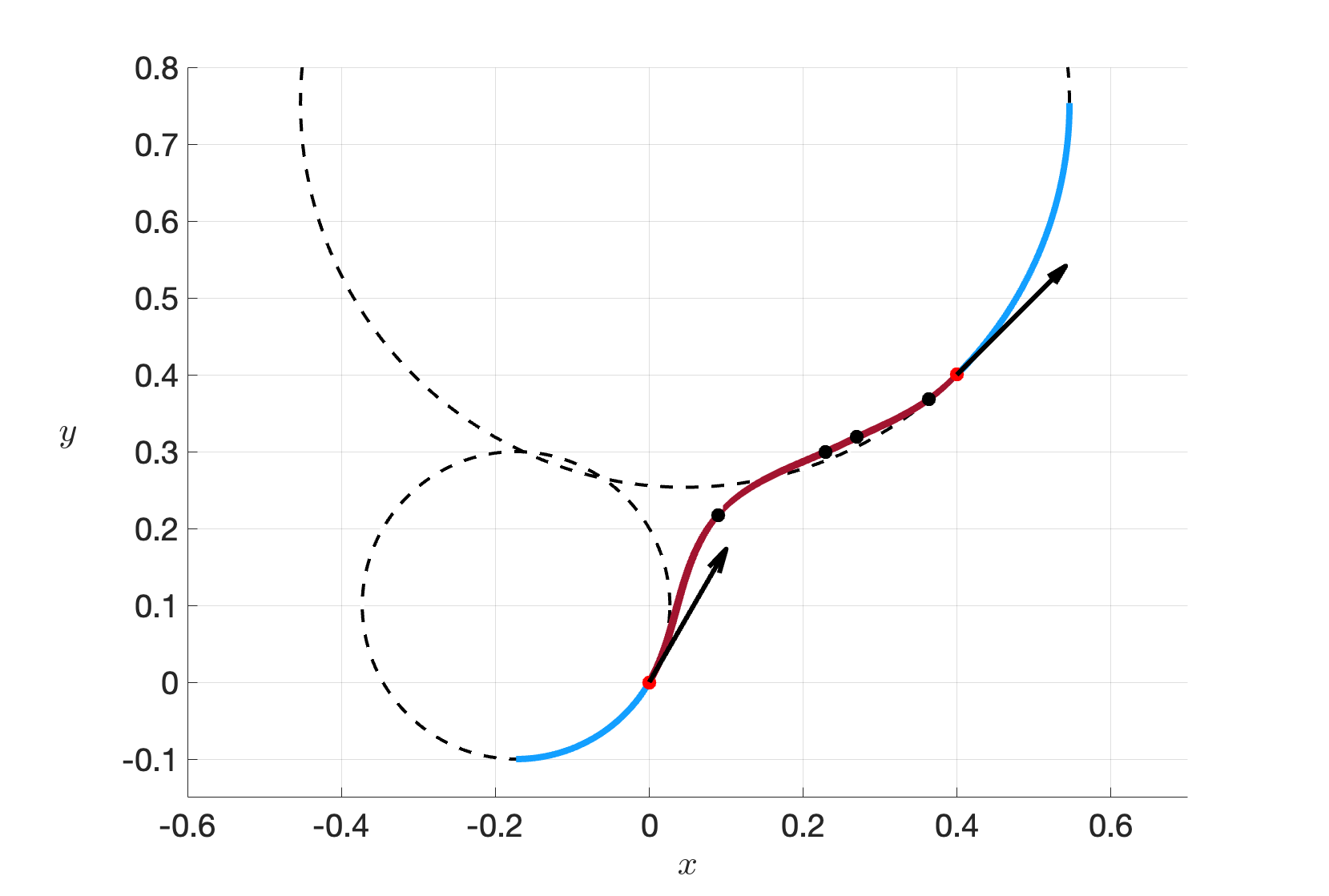} \\[0mm]
{\sf\small\hspace{7mm} (a) Critical curve (in dark red) consisting of five spirals concatenated at the black dots and joining two circular arcs (in light blue).}
\end{center}
\begin{center}
\includegraphics[width=145mm]{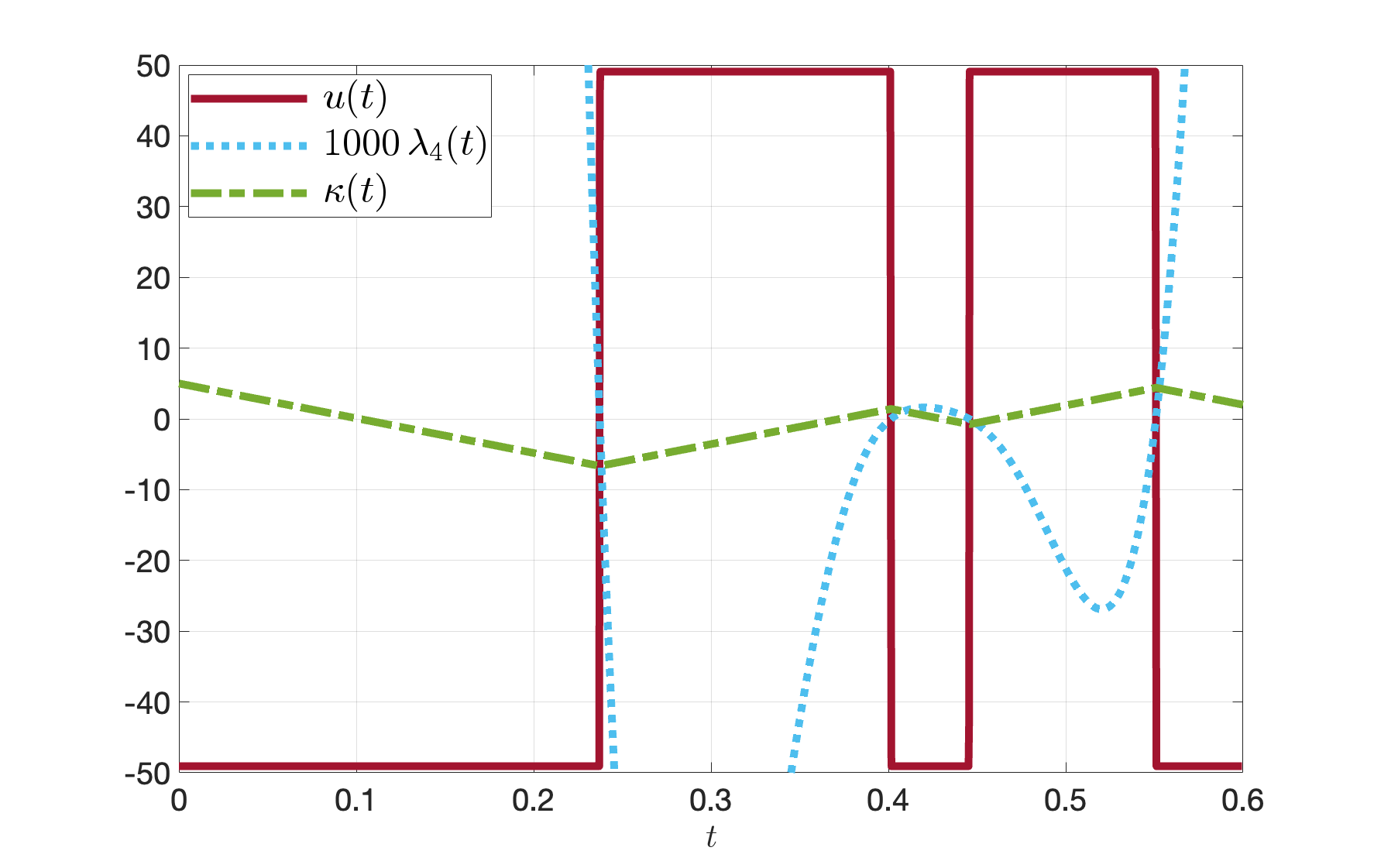} \\[0mm]
{\sf\small\hspace{7mm} (b) Verification of (i) $u(t) = -b\,\sgn(\lambda_4(t))$ and (ii) $\kappa(t)$ is affine in $t$ along each spiral arc.}
\end{center}
\
\caption{\sf Example 3a---Critical curve of length $t_f = 0.6$ and of minimax spirality between the oriented points $(0,0,\pi/3)$ and $(0.4,0.4,\pi/4)$ joining two circular arcs, with $\kappa(0) = 5$ and $\kappa(0.6) = 2$.  Here one gets
 $b\approx 49.0$.}
\label{fig:crit_curves_ex3a}
\end{figure}

From the graphs of the switching function $\lambda_4(t)$ (multiplied by 1000 for easier viewing) and the curvature $\kappa(t)$, one can see that the optimality conditions $u(t) = -b\,\sgn(\lambda_4(t))$ from~\eqref{v}, and the fact that the curvature $\kappa(t)$ is affine in $t$, are numerically verified.

Utilizing the control structure (five spirals with $u(0) = -b$) obtained by solving Problem~(P$_h$), the minimum bound $b$ on the curvature derivative, the arc lengths, and the switching times, are computed at a high precision, by solving Problem~(Pa), as
{\small
\begin{eqnarray*}
b &=& 48.985303304067\,, \\
(\xi_1,\ldots,\xi_5) &=& (0.237659563282,\ 0.164288710499,\ 0.043943296148,\ 0.105089860239,\ 0.049018569832)\,, \\
(t_1,\ldots,t_4) &=& (0.237659563282,\ 0.401948273781,\ 0.445891569929,\ 0.550981430168)\,.
\end{eqnarray*}}

As pointed earlier, Theorem~\ref{theo:classification} does not inform about the number of spirals one should concatenate.  The critical solutions we found in Example~1 were formed by a concatenation of four arcs.  In the current example, however, the number of spirals in the critical curve is found to be five (i.e., four switchings).  Next we will numerically illustrate that, with a slight modification of the problem data, one can observe evidence for possibly infinitely many switchings, i.e., the chatter as in Fuller's phenomenon~\cite{Borisov2000}.

Consider the same problem with $\kappa(0) = 8$, instead of $\kappa(0) = 5$, which is the second instance referred to as Example~3b.  The solution of the directly discretized problem~(P$_h$) with this modified end curvature (and $N = 2000$), which results in $b\approx 90.0$ (correct to one dp), is plotted in Figure~\ref{fig:crit_curves_ex3b}.  The solution curve in Figure~\ref{fig:crit_curves_ex3b}(a) seems to be a concatenation of spirals, by also looking at the graph of $u(t)$ in Figure~\ref{fig:crit_curves_ex3b}(b) which appears to be bang--bang except that the plot exhibits numerical chatter in the ``fourth arc'' roughly between $t = 0.35$ and $t = 0.48$.  Although the graph of $\lambda_4(t)$ numerically verifies the necessary optimality condition $u(t) = -b\,\sgn(\lambda_4(t))$ along the ``regular'' bang-arcs, we observe that $\lambda_4(t) \approx 0$ along the fourth ``chattering arc''.  At this point we refer to Lemma~\ref{lem:tot_singular} according to which partial singularity is not possible if there is no chatter.  

We point out that optimal control chatter has also been numerically observed in problems involving other dynamical systems, for example, in the minimum-time control of an underwater vehicle~\cite{ChySusMauVos2004}.  In~\cite{ZhuTreCer2016}, where the minimum time planar tilting maneuver of a spacecraft is studied, it is proved that there exist optimal chattering arcs when a singular junction occurs.  In the current paper, we restrict our attention to a general classification of the curves of minimax spirality and their computation.  However, it would be interesting to prove, if it is possible, the existence (and features) of chattering for the problem we are studying, as part of future work.

\begin{figure}[ht!]
\begin{center}
\includegraphics[width=160mm]{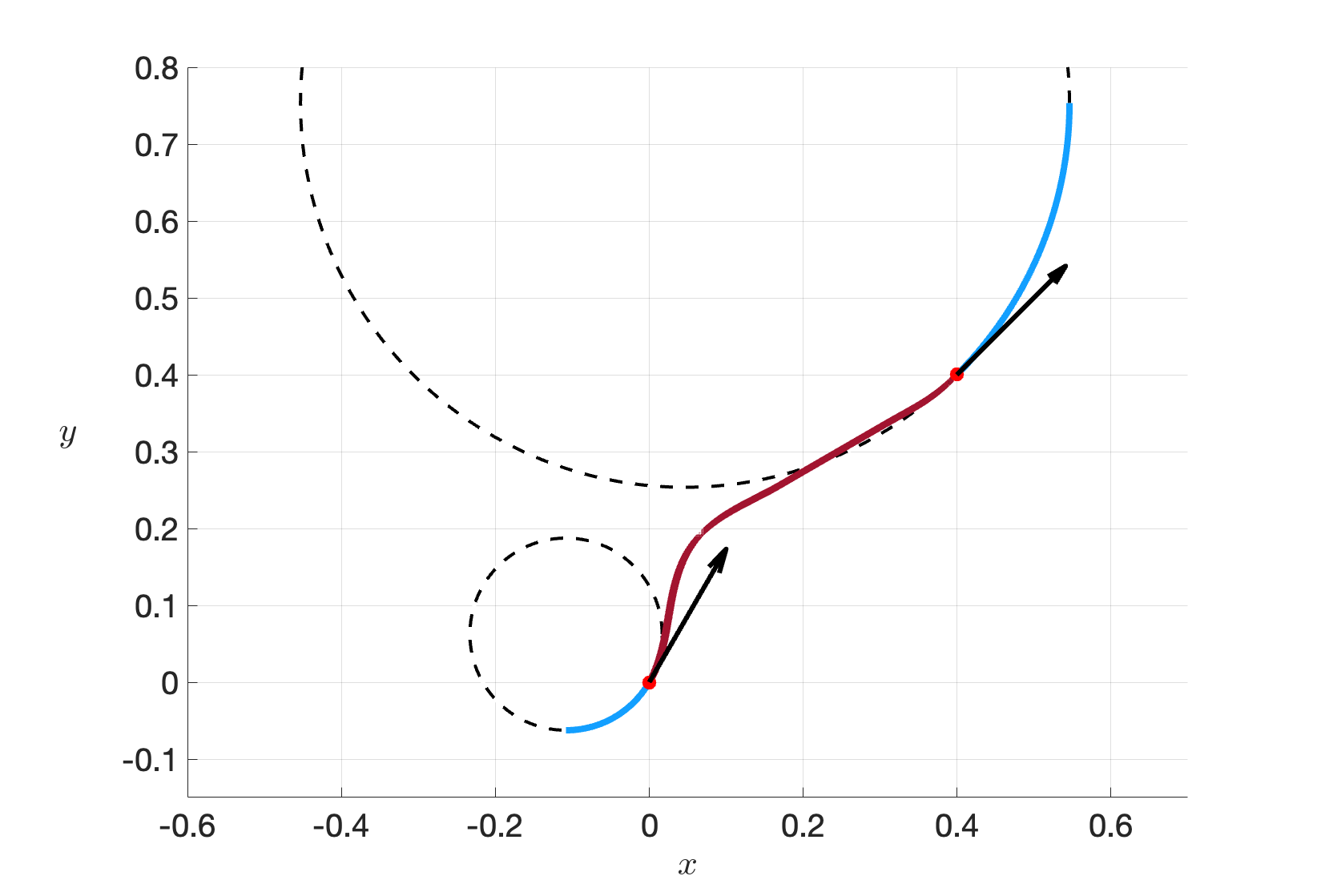} \\[0mm]
{\sf\small\hspace{7mm} (a) Two circular arcs (in light blue) are joined by a curve (in dark red) which solves Problem~(P$_h$).  It is not clear how many arcs are concatenated.}
\end{center}
\begin{center}
\includegraphics[width=145mm]{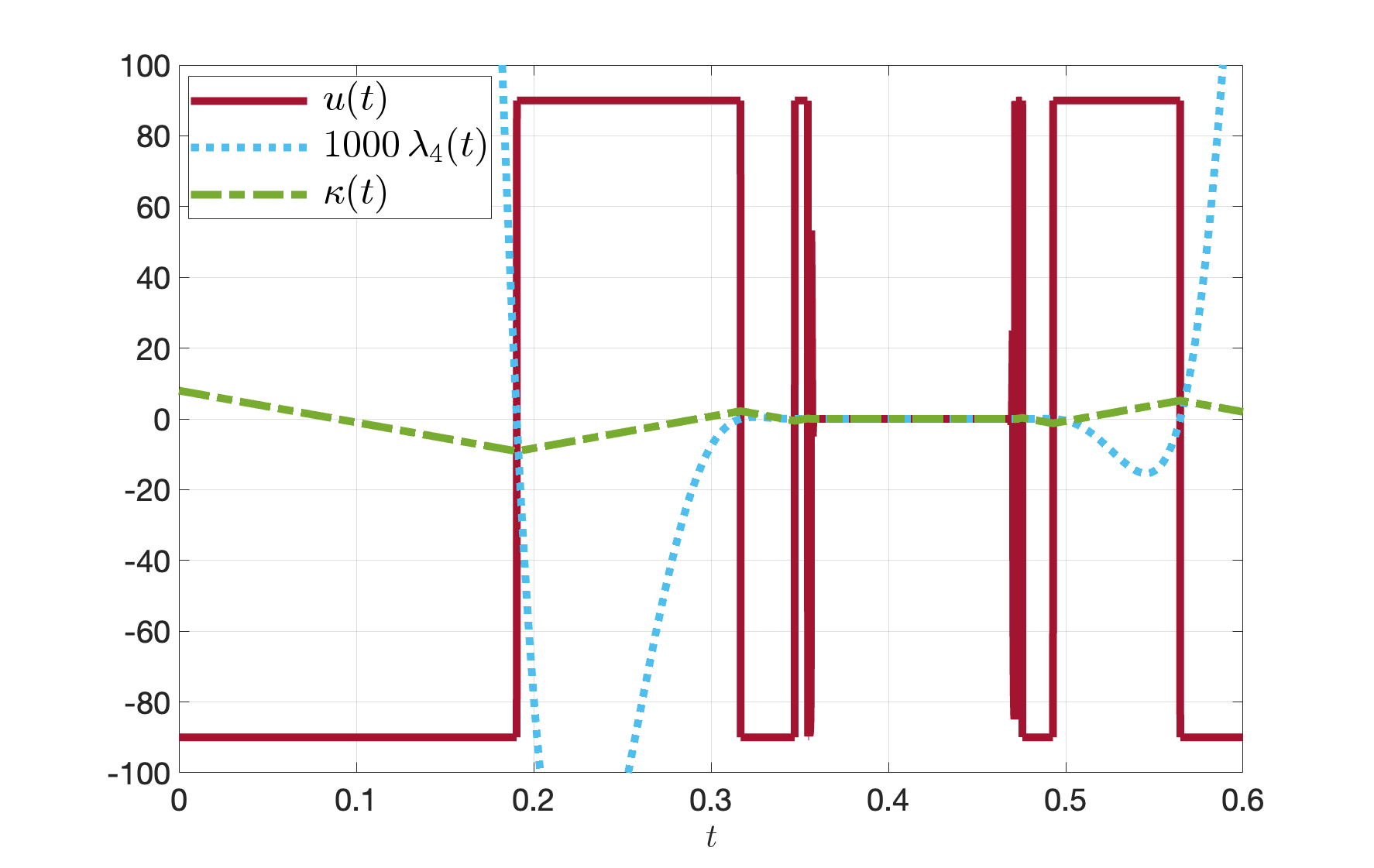} \\[0mm]
{\sf\small\hspace{7mm} (b) Partial verification of (i) $u(t) = -b\,\sgn(\lambda_4(t))$ and (ii) $\kappa(t)$ is affine in $t$ along spiral arcs.  The fourth arc exhibits (evidence of) chatter with $\lambda_4(t)\approx 0$ along the arc.}
\end{center}
\
\caption{\sf Example 3b---Curve of length $t_f = 0.6$ solving Problem~(P$_h$) between the oriented points $(0,0,\pi/3)$ and $(0.4,0.4,\pi/4)$ joining two circular arcs, with $\kappa(0) = 8$ and $\kappa(0.6) = 2$.  Here one gets $b\approx 90.0$.}
\label{fig:crit_curves_ex3b}
\end{figure}

Obviously, a large number of switchings (let alone infinitely many switchings) cannot be implemented. However, the graph of $u(t)$ in Figure~\ref{fig:crit_curves_ex3b}(b) informs one as to what one can do in a practical setting: the chattering (fourth) arc might be replaced by a straight line, i.e., one can think of assigning $u(t) = 0$ along the arc, just by graphically observing from Figure~\ref{fig:crit_curves_ex3b}(b) that not only $u(t) = 0$ most of the time, but also that $\lambda(t) = \kappa(t) = 0$, along the same arc.  Then the structure of the solution can be described by the sequence of constant control values $(\overline{u}_k)_{k=1}^7 = \{-b,b,-b,0,-b,b,-b\}$; in other words,
\[
u(t) := \left\{\begin{array}{rl}
-b\,, & \ \mbox{if } 0 \le t < t_1\,, \\
b\,, & \ \mbox{if } t_1 \le t < t_2\,, \\
-b\,, & \ \mbox{if } t_2 \le t < t_3\,, \\
0\,, & \ \mbox{if } t_3 \le t \le t_4\,, \\
-b\,, & \ \mbox{if } t_4 \le t < t_5\,, \\
b\,, & \ \mbox{if } t_5 \le t < t_6\,, \\
-b\,, & \ \mbox{if } t_6 \le t < 0.6\,, \\
\end{array} \right.
\]
where $t_i$, $i = 1,\ldots,6$, are the switching times.  This parameterization is referred to as Example~3c.  When this structure is fed into Problem~(Pa), the solution depicted in Figure~\ref{fig:crit_curves_ex3c} is obtained.  To the naked eye, there does not seem to be any difference between the curves in Figures~\ref{fig:crit_curves_ex3b}(a) and \ref{fig:crit_curves_ex3c}(a).  This is of course true only up to a precision.

\begin{figure}[t!]
\begin{center}
\includegraphics[width=160mm]{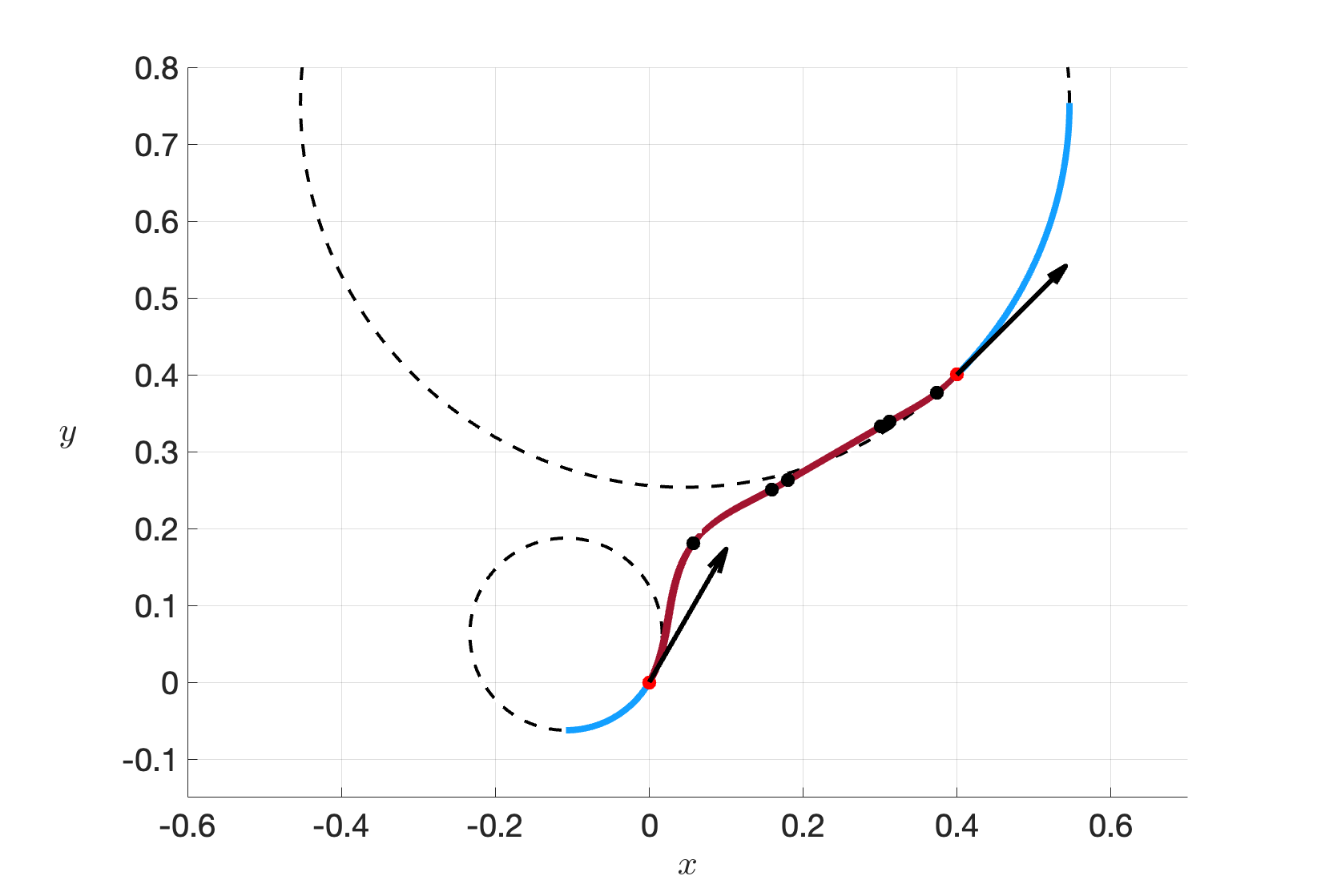} \\[0mm]
{\sf\small\hspace{7mm} (a) Two circular arcs (in light blue) are joined by a curve (in dark red) which solves Problem~(Pa).  Black dots indicate the points where the arcs are concatenated. The fourth arc is a straight line.}
\end{center}
\begin{center}
\includegraphics[width=145mm]{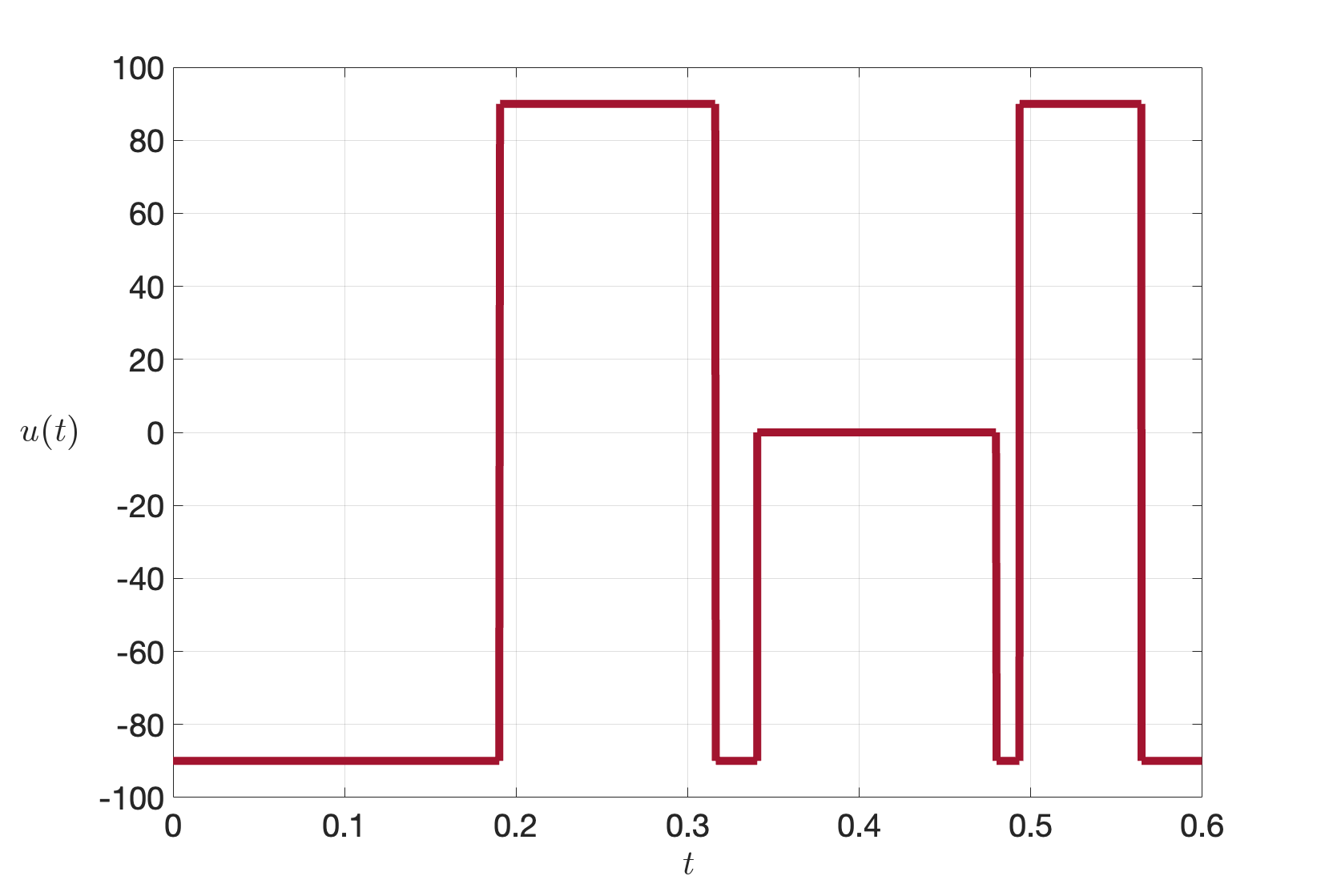} \\[0mm]
{\sf\small\hspace{7mm} (b) The control variable.}
\end{center}
\
\caption{\sf Example 3c---Curve of length $t_f = 0.6$ solving Problem~(Pa) between the oriented points $(0,0,\pi/3)$ and $(0.4,0.4,\pi/4)$ joining two circular arcs, with $\kappa(0) = 8$ and $\kappa(0.6) = 2$. Here one gets $b\approx 89.95$.}
\label{fig:crit_curves_ex3c}
\end{figure}

The solution of Problem~(Pa) yields the following high precision results.
{\small
\begin{eqnarray*}
b &=& 89.945849353595\,, \\
(\xi_1,\ldots,\xi_7) &=& (0.190593874793,\ 0.125757754134,\ 0.024206619117,\ 0.139477144951,\ \\
&&\hspace*{2mm} 0.013358892816,\ 0.071150272188,\ 0.035455442002)\,, \\
(t_1,\dots,t_6) &=& (0.190593874793,\ 0.316351628926,\ 0.340558248044,\ 0.480035392994,\ \\
&&\hspace*{2mm} 0.493394285810,\ 0.564544557998)\,.
\end{eqnarray*}}

Figure~\ref{fig:crit_curves_ex3c}(a) also depicts by black dots the points at which the seven arcs are concatenated.  We re-iterate that the resulting curve is the track sum of three spirals, one straight line segment and three more spirals.

\section{Conclusion}
\label{sec:conclusion}

We have studied the problem of finding curves of minimum spirality with and without a constraint on curvature and presented results classifying the types of solution curves.  We also proposed numerical approaches to compute these curves.

As mentioned earlier in the Introduction, minimum spirality is used as a measure of (maximum) comfort in the design of railway tracks and roads.  It seems to be common practice to connect circular and/or straight tracks by a single easement spiral.  In this paper, we show that for optimality more than just one spiral needs to be concatenated.  We also show that under a curvature constraint (simple bounds on curvature) the optimal track will in general be a concatenation of spirals and circular arcs (and straight lines).

We obtain what looks like numerical evidence that the number of arcs required to be concatenated in an optimal curve can be infinite (Fuller's phenomenon).  We propose a numerical remedy for the example instance of concern, at least to obtain an approximately optimal solution. It would be interesting to investigate and see if it is possible to prove Fuller's phenomenon for the problem studied here, in a way similar to that done for a different problem in~\cite{ZhuTreCer2016} or that geometrically described in the earlier review in~\cite{Borisov2000}.

Again, in practical applications, it might be necessary to minimize not only the spirality but also the length of a curve.  This is a question of multi-objective optimization, which would indeed be interesting to look at in a similar fashion as in~\cite{KayNoaZha2024}.

\section*{Acknowledgments}
The authors would like to thank the two anonymous reviewers whose insightful comments improved the manuscript.

\small

\end{document}